\newtheorem{theorem}{Theorem}[section]
\newtheorem{corollary}[theorem]{Corollary}
\newtheorem{lemma}[theorem]{Lemma}
\newtheorem{proposition}[theorem]{Proposition}
\newtheorem{Conjecture}[theorem]{Conjecture}
\newtheorem{example}{Example}
\def\qed{\vbox{\hrule
  \hbox{\vrule\hbox to 5pt{\vbox to 8pt{\vfil}\hfil}\vrule}\hrule}}
\def\endproof{\unskip \nobreak \hskip0pt plus 1fill \qquad \qed \par \vspace{0.15cm}}
\newcommand{\reals}{\mbox{${\rm I}\!{\rm R}$}}
\newcommand{\diag}{\mbox{\rm diag}}
\newcommand{\conv}{\mbox{\rm conv$\,$}}
\newcommand{\mb}{\mathbb}
\newcommand{\mc}{\mathcal}
\begin{document}

\title{Doubly Stochastic Matrices and Modified Laplacian Matrices  of Graphs}

\author{
 Enide Andrade\footnote{Center for Research and Development in Mathematics and Applications,
 Department of Mathematics,  University of Aveiro, Portugal. {\tt enide@ua.pt}.}\\
 Geir Dahl\footnote{Department of Mathematics,   University of Oslo, Norway. {\tt geird@math.uio.no.}Corresponding author.} 
 }

\maketitle


\date{}


\begin{abstract}
We consider modified Laplacian matrices of graphs, obtained by adding the identity matrix to the Laplacian matrix $L_G$ of a graph $G$. This results  in a positive definite matrix $\tilde{L}_G$. The inverse of $\tilde{L}_G$ is a doubly stochastic matrix. The goal of this paper is to investigate this inverse matrix and how it depends on properties  of the underlying graph $G$. In particular, we introduce a general monotonicity property for the entries of the inverse, and derive a sharper version for the case of path graphs. Finally, we show that, in the case of a path graph, the entries of the inverse can be expressed in terms of Fibonacci numbers via an $LU$ factorization.
We also establish a lower bound for the diagonal entries of this inverse for a tree as a function of the distances between vertices. Furthermore, we present a simple and efficient  algorithm for computing the inverse when the graph is a tree. Moreover, for a general graph, we show that the diagonal entries of this inverse is strictly largest in each row and column. Finally, we discuss a connection to partial differential equations, such as the heat equation.

 
\end{abstract}

\noindent {\bf Key words.} Doubly stochastic matrix, Laplacian matrix, inverse matrix, Laplacian equations, eigenvalues.

\noindent
	{\bf AMS subject classifications.} 
  05C05; 
	 05C50; 
	 15A18; 
  15B51. 

\bigskip\bigskip\bigskip

\section{Introduction} 
\label{sec: introduction}

This paper investigates  doubly stochastic matrices that arise as the inverses of modified Laplacian matrices associated with graphs. Thus, associated to a graph $G$ there is a doubly stochastic matrix $B_G$, and our goal is to study this map. 

The Laplacian matrix of a simple (undirected) graph, denoted $L_G$, is a fundamental concept in graph theory and linear algebra that encodes the structure of a graph in matrix form. It plays a crucial role in analyzing various properties of graphs, such as connectivity (for example, detecting connected components and identifying bottlenecks), flow dynamics (modeling phenomena like diffusion, heat transfer, or information spread through random walks), and spectral characteristics (where eigenvalues and eigenvectors yield insights into the the graph structure and form a basis for graph partition methods); see, for instance, \cite{Fiedler_alg_conn, Fiedler2, Goerttler,Molitierno11}. The Laplacian matrix $L_G$ is defined as the difference between the diagonal matrix of vertex degrees and the adjacency matrix of $G$. Comprehensive surveys of the properties and applications of Laplacian matrices can be found in \cite{CHUNG, Merris94, Molitierno11}.

Doubly stochastic matrices are (componentwise) nonnegative matrices where each row and column sum is $1.$
These matrices are of interest in several areas of mathematics, such as in combinatorics, optimization (the assignment problem) and matrix theory (for instance, majorization theory and matrix scaling), see \cite{RAB06,MaOlAr11}. Moreover, the set $\Omega_n$ of $n \times n$ doubly stochastic matrices is a polytope, often referred to as the {\em Birkhoff polytope}, and it is a well-studied object with interesting properties (see \cite{RAB06} for an in-depth study and many references). 

The motivation for this paper came from the work by Merris \cite{Merris}, where a connection between graph Laplacians and doubly stochastic matrices  was established and studied; the author introduced  the mentioned modified Laplacian and established some properties of the inverse matrix which is  doubly stochastic. Another motivation is the role played by graph Laplacians in connection with certain partial differential equations, such as the heat equation. In such settings the modified Laplacian may be be of interest, and we discuss this briefly at the end of the paper.
\medskip

 Let $G$ be a  simple (undirected) graph with  $n$ vertices $v_1, v_2, \ldots, v_n$. Sometimes we identify each vertex $v_i$ by its label $i$ ($i \le n$). Let  $L_G$ be the Laplacian matrix of $G$, so $L_G=D-A$ where $A$ is the adjacency matrix of $G$ and $D$ is the diagonal matrix with diagonal given by the degrees of the vertices. Define the modified graph Laplacian 
 \[
    \tilde{L}_G=   L_G+I_n.
 \]
 This matrix is symmetric, positive definite and therefore invertible. 
 We let the $i$'th row and $i$'th column of $\tilde{L}_G$ be associated with vertex $v_i$ ($i \le n$). 
 
 The matrix $\tilde{L}_G$ and its inverse were introduced and studied in \cite{Golender,Merris}, see  also \cite{Chebotarev}.  Let $G'$ be the graph obtained from $G$ by adding a vertex $w$ and an edge between $w$ and each vertex $v_i$. As noted in \cite{Merris} one may view $\tilde{L}_G$ as the principal submatrix obtained from the Laplacian matrix $L_{G'}$ of $G'$ by deleting the row and column associated with $w$. 
 
 The inverse matrix of $\tilde{L}_G$, denoted by $B_G$, will be the focus of this paper, so  
 \begin{equation}
 \label{eq:define_B}
   B_G=\tilde{L}_G^{-1}=\left(L_G+I_n\right)^{-1}.
 \end{equation}  
 As $\tilde{L}_G$ is an M-matrix, its inverse $B_G$ is entrywise nonnegative, in fact, positive (see below). Moreover, as the all ones vector $e$ is an eigenvector associated to the eigenvalue $0$ of $L_G$, $e$ is an eigenvector for $B_G$ associated to the eigenvalue $1$, that is, $B_G$ is doubly stochastic. 
 
 This inverse matrix $B_G=[b_{ij}]$ is therefore associated with the graph $G$, and we will investigate how the elements in $B_G$ reflect and depend on properties in $G$. $B_G$ was  called the {\em doubly stochastic matrix of $G$} in \cite{Merris,Merris_II,Zhang05}. $B_G$ is symmetric. 
 We associate rows and columns of $B_G$ to vertices as we did for $\tilde{L}_G$, so the $i$'th row and $i$'th column of $B_G$ are associated with vertex $v_i$ ($i \le n$). Each column $x$ of the inverse matrix $B_G$ is the solution of a system of linear equations
 \begin{equation}
     \label{eq:Laplace_eq}   \tilde{L}_G\, x=e_k,
 \end{equation}
 where $e_k$ is the $k$'th unit vector, so $x$ is column $k$ in $B_G$. These  equations will be called the {\em Laplacian equations}, and they will be used to derive properties of $B_G$.

It was shown in \cite{Merris} (see also \cite{Chebotarev}) that if $G$ is connected, and the Laplacian eigenvalues of $G$ are $\lambda_1 \geq \lambda_2 \geq \cdots \ge  \lambda_{n}= 0$, then $B_G$ is a positive definite doubly stochastic matrix that is entrywise positive,
 with eigenvalues 
\[
1\ge\frac{1}{1+\lambda_{n-1}}\ge \cdots\ge \frac{1}{1+\lambda_{1}}>0.
\]

This matrix was studied in \cite{Merris_II} and some of its properties  were established, expressing some relations of its entries and the structure of the graph. In fact $\omega(G)>0$ if and only if $G$ is connected, and 
$\omega(G) \leq 1/(n + 1)$ with equality if and only if $G$ is the complete graph.  If $i \neq j$, 
then 
\[
\omega(G) \geq  2b_{ij},
\]
with equality if and only if the $i$'th vertex has degree $n - 1.$ 
Additionally it was suggested that $\mbox{\rm{max}}_{i,j}\,  b_{ij}$ identifies the most remote vertices of $G,$ in the sense that they are ``less connected". This supported an assertion in \cite{Chebotarev}. In \cite{Chebotarev}, the authors deal with multigraphs (and multidigraphs) and propose a family of graph structural indices related to the matrix-forest theorem. They define the product of the weights of all edges that belong to a subgraph $H$ of a multigraph $G$ as the weight of conductance of $H$ and denoted by $ \xi (H)$. Additionally, for every nonempty set of subgraphs $G$, its weight is defined as $\xi(G) = \sum_{H \in G} \xi (H).$

Then, the entries $b_{ij}$ of $B_G$  are identified in \cite[Theorem 3]{Chebotarev} (called matrix-forest theorem) for weighted multigraph $G$ as 
\[
b_{ij} =\frac{\xi (\mathcal{F})^{ji}}{\xi(\mathcal{F})},
\]
where $\mathcal{F}$ is the set of  set of all spanning rooted forests and $ \mathcal{F}^{ij}$ denotes the set of those spanning rooted forests of $G$ such that $v_i$ and $v_j$ belong to the same tree rooted at $v_i$. For weighted multidigraphs, the entries $b_{ij}$ were also expressed in a similar way.
Additionally, it was stated in \cite{Chebotarev} that, the matrix-forest theorem allows to consider the matrix $B_G$ as the matrix of ``relative forest
accessibilities" of the vertices of a multigraph $G$ (or a multidigraph). These values can be used to measure the proximity between vertices (the ``farther" $v_i$ from $v_j$, the smaller is $b_{ij}$).

Recently, in \cite{Zhang} it was proven that, for a tree $T$ with $n$ vertices, one has
$$\frac{\sqrt{5}}{(\frac{3+\sqrt{5}}{2})^{n} -(\frac{3-\sqrt{5}}{2})^{n}}\leq \omega(T) \leq \frac{1}{2(n+1)},$$
with right equality if and only if $T$ is a star and left equality if and only if $T$ is a path. 
We will see later that if $T$ is a path of order $n$, denoted by
$P_n$, then the expression on the left-hand side is the smallest entry of 
$B_{P_{n}}$ and, it can be expressed as the ratio of the first Fibonacci number to the 
$2n$-th Fibonacci number in the Fibonacci sequence.

\noindent Let $\lambda_{n-1} =a(G)$; the algebraic connectivity of $G$. The second largest eigenvalue of $B_G$ is $1/(1 + a(G)).$ In \cite{Merris_II} some relations between these new graph invariants and the algebraic connectivity were established. In particular, the following two conjectures were presented:

\begin{Conjecture}\cite{Merris_II}\label{1.2}
Let $G$ be a graph on $n$ vertices. Then
$$a(G) \geq  2(n + 1)\omega(G).$$
\end{Conjecture}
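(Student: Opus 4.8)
The assertion is equivalent to $\omega(G)\le a(G)/(2(n+1))$, so I would work directly with the smallest entry of $B_G$ and exploit that $1/(1+a(G))$ is the second largest eigenvalue of $B_G$, sharing its eigenvectors with the eigenvalue $a(G)$ of $L_G$. The plan is to feed a Fiedler vector into $B_G$. Fix $f$ with $L_G f=a(G)f$; being orthogonal to the all-ones vector $e$ it satisfies $\sum_i f_i=0$, and since $f\ne 0$ it has a positive entry, so let $p$ be a vertex with $f_p=\max_i f_i>0$. From $B_G f=\tfrac{1}{1+a(G)}f$ read off the $p$-th coordinate, $\sum_i b_{pi}f_i=\tfrac{f_p}{1+a(G)}$; substituting $g_i:=f_p-f_i\ge 0$ (so $g_p=0$ and $\sum_i g_i=nf_p$) and using that row $p$ of $B_G$ sums to $1$, this becomes
\[
\sum_i b_{pi}\,g_i\ =\ \frac{a(G)}{1+a(G)}\,f_p .
\]

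Next I would lower-bound the left-hand side. The crude estimate $b_{pi}\ge\omega(G)$ together with $g_p=0$ gives $\tfrac{a(G)}{1+a(G)}f_p\ge \omega(G)\sum_i g_i=\omega(G)\,nf_p$, hence $\omega(G)\le\frac{a(G)}{n(1+a(G))}$. Since $\frac{a}{n(1+a)}\le\frac{a}{2(n+1)}$ precisely when $a\ge 1+\tfrac{2}{n}$, this already proves the conjecture for every $G$ with $a(G)\ge 1+\tfrac 2n$ (and for $a(G)\ge 2$ it also follows directly from the known bound $\omega(G)\le\frac{1}{n+1}$, since then $2(n+1)\omega(G)\le 2\le a(G)$). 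To lower the threshold to $a(G)\ge 1$ I would sharpen the estimate using the Laplacian equation for $f$ at $p$, namely $\sum_{i\sim p}g_i=a(G)f_p$: if one also has a per-edge estimate $b_{pi}\ge 2\omega(G)$ for neighbours $i$ of $p$ — an inequality that holds with equality in all the extremal examples I checked (the star $K_{1,n-1}$ and, e.g., the bowtie, where the relevant neighbour has degree $n-1$), and which I would try to prove from the column equations $\tilde L_G x=e_p$ or from the matrix--forest formula $b_{ij}=|\mathcal F^{ji}|/|\mathcal F|$ — then splitting the sum over neighbours and non-neighbours of $p$ yields $\frac{a(G)}{1+a(G)}f_p\ge \omega(G)\,(n+a(G))\,f_p$, i.e. $\omega(G)\le\frac{a(G)}{(1+a(G))(n+a(G))}$. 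The identity $(1+a)(n+a)-2(n+1)=a^2+(n+1)a-(n+2)=(a-1)(a+n+2)$ then delivers $a(G)\ge 2(n+1)\omega(G)$ whenever $a(G)\ge 1$, with the star $K_{1,n-1}$ ($a(G)=1$, $\omega(G)=\frac{1}{2(n+1)}$) showing this bound is tight.

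The case $a(G)<1$ is where I expect the real difficulty: here the Fiedler estimate above is genuinely too weak (already for $P_4$, where $a(G)=2-\sqrt2$), and this is also where the extremal behaviour lives. Since $a(G)<1$ forces a sparse cut in $G$, the plan in this regime is to abandon the spectral route and bound $\omega(G)$ combinatorially: choose the minimising pair $(s,t)$ on opposite sides of a sparse Fiedler cut $(S,\bar S)$, and estimate $b_{st}$ either through the matrix--forest theorem — every spanning rooted forest counted by $\mathcal F^{ts}$ must route the $s$–$t$ path across a cut edge, so $b_{st}$ is controlled by the number of cut edges — or through the absorbing random walk on the vertex-augmented graph $G'$, for which $b_{st}=\tfrac{1}{d_t+1}\,\mathbb E_s[\#\{\text{visits to }t\text{ before hitting }w\}]$ is small across a bottleneck; then one relates the cut size back to $a(G)$ via a discrete Cheeger inequality. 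The main obstacle is making this quantitatively tight enough to recover the exact constant $2(n+1)$ rather than a multiple of it, and it may be wise to first settle the tree case using the explicit Fibonacci-type and product expressions for $b_{ij}$ developed elsewhere in the paper before attempting general $G$.
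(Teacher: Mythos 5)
First, a point of context: the paper itself contains no proof of this statement. It is quoted verbatim as a conjecture of Merris, and the text immediately afterwards records that it was confirmed by Berman and Zhang in \cite{Berman}; so there is nothing in this paper to compare your argument against line by line, and your proposal has to be judged on its own.

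Judged that way, it has a genuine gap. Your Fiedler-vector computation is correct as far as it goes: the identity $\sum_i b_{pi}g_i=\tfrac{a(G)}{1+a(G)}f_p$ with $g_i=f_p-f_i\ge 0$, $g_p=0$, $\sum_i g_i=nf_p$ is right, and the resulting bound $\omega(G)\le \tfrac{a(G)}{n(1+a(G))}$, together with the algebra $(1+a)(n+a)-2(n+1)=(a-1)(a+n+2)$, is sound. But the argument only reaches the conjecture when $a(G)\ge 1$, and even then it leans on the auxiliary inequality $b_{pi}\ge 2\omega(G)$ for neighbours $i$ of $p$, which you have not proved and which is false as a general statement: for $K_n$ one has $b_{ij}=\tfrac{1}{n+1}=\omega(K_n)$ for every adjacent pair, and for $K_2$ likewise $b_{12}=\omega$. (The inequality the paper quotes from \cite{Merris_II} is $b_{ii}\ge 2b_{ij}$, relating a diagonal entry to an off-diagonal one, not two off-diagonal entries; it does not substitute for what you need.) More importantly, the regime $a(G)<1$ is not a corner case but the generic one for sparse graphs --- it contains every tree other than the star, every graph with a cut vertex, and all the near-extremal examples --- and you yourself verify that your spectral bound already fails for $P_4$ there. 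What you offer for that regime (Cheeger-type cut bounds, matrix-forest counting, absorbing random walks) is a research programme, not a proof, and you concede that recovering the exact constant $2(n+1)$ by such means is the unresolved difficulty. So the proposal establishes the conjecture only for $a(G)\ge 1+\tfrac{2}{n}$ (or $a(G)\ge 2$ via $\omega\le\tfrac{1}{n+1}$), conditionally extends it to $a(G)\ge 1$, and leaves the main case open; it should be regarded as a partial result plus a plan rather than a proof.
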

\begin{Conjecture} \cite{Merris_II}\label{1.1}
 Let $E_n$ be the degree anti-regular graph, that is, the unique connected graph whose
vertex degrees attain all values between $1$ and $n-1.$ Then
$\omega(E_n) = 1 /2(n + 1).$
\end{Conjecture}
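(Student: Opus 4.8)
The plan is to exploit the rigid structure of the connected degree anti-regular graph $E_n$. It has a universal vertex $w$ (degree $n-1$) and a vertex $u$ of degree $1$; the unique neighbour of $u$ is necessarily $w$; and deleting $u$ and $w$ leaves exactly $E_{n-2}$. This is the standard recursive description of the anti-regular graph as a threshold graph, and I would first record it as a short lemma (verifying it, if needed, from the alternating creation sequence, or from the forced degrees $n-1$ and $1$). I would then compute the entries of $B_{E_n}=\tilde L_{E_n}^{-1}$ directly from the Laplacian equations $\tilde L_{E_n}\,x=e_k$, solving them column by column in the vertex ordering $(u,\ V(E_{n-2}),\ w)$. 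In this ordering the $u$-row of $\tilde L_{E_n}$ is $2e_u-e_w$; the $w$-row has diagonal $n$ and $-1$ in every other position; and the principal block indexed by $V(E_{n-2})$ equals $L_{E_{n-2}}+2I$ (each inner vertex gains $1$ to its degree from the edge to $w$, plus $1$ more from the $+I_n$ in $\tilde L$), with an extra $-1$ in the $w$-column and a $0$ in the $u$-column.

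Taking $k=u$: the $u$-equation gives $2b_{uu}-b_{wu}=1$; the $w$-equation together with the fact that column $u$ of the doubly stochastic matrix $B_{E_n}$ sums to $1$ gives $n b_{wu}=1-b_{wu}$, hence $b_{uw}=1/(n+1)$; and for every inner vertex $v$ the $v$-equation collapses to $\big[(L_{E_{n-2}}+2I)\,x_{\mathrm{inner}}\big]_v=b_{uw}=1/(n+1)$, because $u$ is not adjacent to any inner vertex and $w$ contributes the same constant to each inner equation. Since $L_{E_{n-2}}\bone=0$ we have $(L_{E_{n-2}}+2I)\bone=2\bone$, so $x_{\mathrm{inner}}=\tfrac{1}{2(n+1)}\bone$; that is, $b_{uv}=\tfrac{1}{2(n+1)}$ for every vertex $v$ of the inner $E_{n-2}$. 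This already gives $\omega(E_n)\le \tfrac{1}{2(n+1)}$ for $n\ge 3$.

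For the reverse inequality I would run the same computation on the remaining columns. Column $w$ yields $b_{ww}=2/(n+1)$ and $b_{wv}=1/(n+1)$ for inner $v$, and the $u$-equation gives $b_{uu}=\tfrac{n+2}{2(n+1)}$. For an inner column $k=v_0$, the inner block satisfies $(L_{E_{n-2}}+2I)\,x_{\mathrm{inner}}=e_{v_0}+\tfrac{1}{n+1}\bone$, hence $x_{\mathrm{inner}}=(L_{E_{n-2}}+2I)^{-1}e_{v_0}+\tfrac{1}{2(n+1)}\bone$; since $L_{E_{n-2}}+2I$ is a symmetric positive definite M-matrix, its inverse is entrywise nonnegative, so $b_{vv_0}\ge \tfrac{1}{2(n+1)}$ for all inner $v,v_0$. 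Comparing with the values already found, every entry of $B_{E_n}$ is at least $\tfrac{1}{2(n+1)}$, so $\omega(E_n)=\tfrac{1}{2(n+1)}$ for $n\ge 3$. The only genuinely non-routine ingredient is the structural lemma $E_n-\{u,w\}\cong E_{n-2}$; once it is available, the decoupling of the inner block makes the rest mechanical. (As an independent check one can also use that the Laplacian spectrum of the threshold graph $E_n$ is the conjugate of its degree partition, namely $\{0,1,\dots,n\}\setminus\{\lceil n/2\rceil\}$, giving $\det\tilde L_{E_n}=(n+1)!/(\lceil n/2\rceil+1)$.) A small separate remark is needed for $n=2$, where $E_2=K_2$ has no inner vertices and $\omega(E_2)=1/3\neq 1/(2\cdot 3)$, so the identity is to be read for $n\ge 3$.
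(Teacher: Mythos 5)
The first thing to flag is that the paper does not prove this statement and in fact takes the opposite position: it appears as Conjecture~\ref{1.1}, attributed to Merris, and the paper immediately afterwards asserts that ``In \cite{Zhang} a counterexample to Conjecture~\ref{1.1} was given.'' So there is no proof in the paper to compare yours against, and the paper's stated view is that what you have proved is false. That said, I have checked your argument and cannot find a flaw. The structural lemma ($E_n$ has a unique dominating vertex $w$ and a unique pendant vertex $u$ with $u\sim w$, and $E_n-\{u,w\}\cong E_{n-2}$) is the standard threshold-graph recursion and holds for $n\ge 4$. Your column-$u$ computation is right: the $w$-equation combined with the column-sum identity gives $b_{wu}=1/(n+1)$; the inner block decouples as $(L_{E_{n-2}}+2I)\,x_{\mathrm{inner}}=\frac{1}{n+1}\bone$; and since $(L_{E_{n-2}}+2I)\bone=2\bone$ this forces $b_{uv}=\frac{1}{2(n+1)}$ for every inner $v$. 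The lower bound on the inner-inner entries via entrywise nonnegativity of $(L_{E_{n-2}}+2I)^{-1}$ is also sound. I verified numerically that the minimum entry of $B_{E_n}$ equals $\frac{1}{2(n+1)}$ for $n=3,4,5$ (e.g.\ for the paw $E_4$ the minimum entry of $B_{E_4}$ is $1/10$), consistent with your formula.

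The genuine gap is therefore not in your linear algebra but in the fact that you contradict the paper without engaging with it. Before this can stand you must either (a) locate the counterexample cited by the paper and explain why it does not apply to the statement as printed --- my suspicion is that the paper has swapped or mis-stated which of Merris's two conjectures was confirmed and which was refuted: the cited Berman--Zhang reference is literally titled ``A note on the degree antiregular graphs'' and is the natural candidate for a confirmation of $\omega(E_n)=\frac{1}{2(n+1)}$ by essentially your computation, while the refuted conjecture plausibly concerns something else (uniqueness of the extremal graph, or the ``remoteness'' interpretation of the entries) --- or (b) find the error in your own argument that the claimed counterexample exposes. Two small repairs: in your column-$w$ paragraph the $u$-equation yields $b_{uw}=1/(n+1)$, not $b_{uu}$ (the value $b_{uu}=\frac{n+2}{2(n+1)}$ comes from column $u$); and your caveat excluding $n=2$ (where $\omega(K_2)=1/3\ne 1/6$) is correct and must be kept, since the conjecture as printed carries no restriction on $n$.
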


\noindent Berman and Zhang in \cite{Berman} confirmed Conjecture \ref{1.2}. In \cite{Zhang} a counterexample to Conjecture \ref{1.1} was given. 

The Laplacian matrix $L_G$ is singular, so it has no inverse. Still, it has a group inverse (generalized inverse) and this matrix has been investigated, see Chapter 8 in \cite{Molitierno11} and the references given there. In particular, one studies this for the weighted Laplacian matrix and with combinatorial results for trees. 

The remaining paper is organized as follows. Section 2 presents monotonicity properties of the entries of $B_{P_n}$ for a path $P_n$ with $n$ vertices. It is further shown that these entries can be expressed in terms of quotients of Fibonacci numbers via an $LU$ decomposition. Section 3 establishes a monotonicity property for a general graph $G$ showing that certain results extend as strong inequalities. For a tree $T$ a lower bound on the diagonal entries of $B_T$ is presented in terms of vertex distances, and this bound is explicitly determined for a specific family of trees.
Section 4 gives an algorithm for computing the entries of $B_T$ when $T$ is a tree, and  some applications are presented. Finally, Section 5 explores a connection to partial differential equations, focusing in particular on the heat equation.

\smallskip
{\bf Notation:} In a graph $G$ we write  $u \sim v$ to indicate that two vertices $u$ and $v$ are adjacent, i.e., $uv$ is an edge in $G$. We consider vectors in $\mb{R}^n$ as column vectors and identify these with  real $n$-tuples. The $i$'th component of a vector $x \in \mb{R}^n$ is usually denoted by $x_i$ ($ i \le n$). A vector $d=(d_1, d_2, \ldots, d_n) \in \mb{R}^n$ is called {\em monotone} if it is nonincreasing, i.e., $d_1 \ge  d_2 \ge  \cdots \ge  d_n$. 
A zero matrix, or vector, is denoted by $O$, and an all ones vector is denoted by $e$. Here the dimension is usually suppressed. The diagonal of an $n \times n$  matrix $A=[a_{ij}]$ is denoted by $\diag(A)$, so $\diag(A)=(a_{11}, a_{22}, \ldots, a_{nn})$. The transpose of a matrix $A$ is denoted by $A^t$. 
 Moreover, $P_n$ (resp. $S_n$) is the path (resp. the star) with $n$ vertices, and $K_n$ represents the complete graph with $n$ vertices. We use $\diamond$ to indicate the end of an example.


\section{Monotonicity for paths}
 \label{se:mono}

In this section we prove some interesting monotonicity properties of the inverse matrix $B_G=\tilde{L}_{G}^{-1}$ for paths ($G$ is a path). In the next section we consider more general graphs and show related properties.

Consider the path $P_n$ with $n$ vertices. Let $L_{P_n}$ be its Laplacian matrix and define $\tilde{L}_{P_n}=L_{P_n}+I_n$ where $I_n$ is the identity matrix of order $n$. 
For instance, if $n=4$, the matrix $\tilde{L}_{P_4}$ is
\[
\tilde{L}_{P_4}=
\left[
\begin{array}{rrrr}
   2 & -1 & 0 & 0 \\
    -1&  3 & -1 & 0  \\
    0 & -1&  3 & -1   \\
   0 & 0 & -1&  2 
\end{array}
\right].
\]

So $\tilde{L}_{P_n}$ is symmetric and positive definite, and these properties also hold for the inverse $\tilde{L}_{P_n}^{-1}$. We shall establish a certain monotonicity property of the inverse matrix 
$B_{P_n}=\tilde{L}_{P_n}^{-1}$.

Let $\tilde{L}_{P_n}=[a_{ij}]$, so  $a_{i,i+1}=a_{i+1,i}=-1$ ($1 \le i \le n-1$), $a_{ii}=3$ ($2\le i \le n-1$), and $a_{11}=a_{nn}=2$. Let $2\le k \le n-1$ and define the $k \times k$ matrix $M_k$ based on the matrix $\tilde{L}_{P_k}$ by changing the entry in position $(1,1)$ to 3 (this entry is 2 in $\tilde{L}_{P_k}$). We also define $M_1=[2]$. Similarly, define $\tilde{M}_k$ based on the matrix $\tilde{L}_{P_k}$ by changing the entry in position $(k,k)$ to 3 (this entry is 2 in $\tilde{L}_{P_k}$). Moreover $\tilde{M}_1=M_1$. The matrix $\tilde{M}_k$ is obtained from $M_k$ by 
reversing the order of both rows and columns, and therefore $\det \tilde{M}_k=\det M_k$.

\begin{example}
{\rm Considering $\tilde{L}_{P_4}$, we have:
\[
M_4=
\left[
\begin{array}{rrrr}
   3 & -1 & 0 & 0 \\
    -1&  3 & -1 & 0  \\
    0 & -1&  3 & -1   \\
   0 & 0 & -1&  2 
\end{array}
\right] \;\;\mbox{and}\; \;
\tilde{M}_4=\left[
\begin{array}{rrrr}
   2 & -1 & 0 & 0 \\
    -1&  3 & -1 & 0  \\
    0 & -1&  3 & -1   \\
   0 & 0 & -1&  3 
\end{array}
\right].
\]
Note that $\det M_2=5$ and $\det M_1=2$.} \hfill{$\diamond$}
\end{example} 

\noindent For an $n \times n$  matrix $A$, let $A_{[ij]}$ denote the $(n-1) \times (n-1)$  submatrix obtained from $A$ by deleting row $i$ and column $j$. 

\begin{lemma}
 \label{lem:det-recursive}
 \begin{equation}
  \label{eq:double-property}
     \det M_k > 2  \det M_{k-1} \;\;\;(2\le k \le n-1).
 \end{equation}
\end{lemma}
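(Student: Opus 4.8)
The plan is to reduce the inequality to a short recurrence for the determinants. Write $D_k=\det M_k$, so that $M_k$ is the $k\times k$ tridiagonal matrix with diagonal $(3,3,\ldots,3,2)$ and all sub- and super-diagonal entries equal to $-1$. First I would record the two smallest values directly from the definition, namely $D_1=2$ and $D_2=5$.

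Next I would derive a three-term recurrence by cofactor expansion. Expanding $\det M_k$ along its first row $(3,-1,0,\ldots,0)$, the cofactor of the $(1,1)$ entry is $\det (M_k)_{[11]}$, and deleting the first row and column of $M_k$ leaves precisely $M_{k-1}$. The cofactor of the $(1,2)$ entry involves $(M_k)_{[12]}$, whose first column is $(-1,0,\ldots,0)^t$; a further expansion along that column leaves $M_{k-2}$. Keeping track of the signs gives
\[
D_k = 3D_{k-1} - D_{k-2} \qquad (k\ge 3).
\]
This is just the standard tridiagonal-determinant recursion; incidentally it identifies $D_k$ with the odd-indexed Fibonacci number, a fact used later in the section, but only the recursion itself is needed here.

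From the recurrence, $D_k - 2D_{k-1} = D_{k-1} - D_{k-2}$ for $k\ge 3$. Moreover each $M_j$ is symmetric and strictly diagonally dominant with positive diagonal, hence positive definite, so $D_j>0$ for every $j$. It therefore suffices to prove by induction on $k$ that $D_k - 2D_{k-1}>0$ for all $k\ge 2$. The base case $k=2$ is $5-4=1>0$. For $k\ge 3$, the inductive hypothesis $D_{k-1}>2D_{k-2}$ together with $D_{k-2}>0$ yields $D_{k-1}-D_{k-2}>D_{k-2}>0$, and hence $D_k - 2D_{k-1} = D_{k-1}-D_{k-2}>0$. This proves \eqref{eq:double-property}.

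The only step needing genuine care is the cofactor expansion that produces the recurrence — in particular getting the correct sign on the second term and checking that the residual submatrices are again of the form $M_{k-1}$ and $M_{k-2}$. Once the recurrence is in hand, the rest is a one-line induction, so I do not expect any real obstacle.
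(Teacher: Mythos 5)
Your proof is correct and follows essentially the same route as the paper: both derive the three-term recurrence $\det M_k = 3\det M_{k-1}-\det M_{k-2}$ by cofactor expansion along the first row and then conclude by the same one-line induction. Your explicit remark that $\det M_{k-2}>0$ (via diagonal dominance) is a small point the paper leaves implicit, but otherwise the arguments coincide.
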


\begin{proof}
  We prove the result by induction on $k$, so assume (\ref{eq:double-property}) holds for integers smaller than $k$. The inequality holds for $k=2$. We compute $\det M_k$ by Laplace expansion  in the first row. Observe that for $k \ge 3$ $(M_k)_{[12]}$ is a $2 \times 2$ block upper triangular matrix with determinant 
  $(-1)\cdot \det M_{k-2}$. This gives
 \[
  \begin{array}{ll} \vspace{0.1cm}
    \det M_k &=3\det M_{k-1} - (-1) \cdot (-1) \cdot \det M_{k-2} \\ \vspace{0.1cm}
              &=3\det M_{k-1} -  \det M_{k-2}  \\ \vspace{0.1cm}
              &=2\det M_{k-1}+(\det M_{k-1}-  \det M_{k-2}) \\ \vspace{0.1cm}
              &>2\det M_{k-1},
  \end{array}  
 \]
 where the inequality holds due to  the induction hypothesis. This proves (\ref{eq:double-property}) by induction.
\end{proof}

\medskip
Let $A=[a_{ij}]$ be an $n \times n$ matrix. We say that $A$ is {\em $d$-monotone} ($d$ indicates diagonal) if 
\[
     a_{i1}< a_{i2}<\cdots <a_{i,i-1} < a_{ii} > a_{i,i+1} > \cdots > a_{in} \;\;\;(i \le n).
\]
Thus, in each row the largest entry is on the (main) diagonal, and from the diagonal the entries strictly decrease in each direction. If $A$ in addition is symmetric, the same property holds in every column. 

\begin{theorem}
 \label{thm:B_monotone}
  Consider the path $P_n$ and the matrix 
  $B_{P_n}=\tilde{L}_{P_n}^{-1}=[b_{ij}]$. 
 Then $B_{P_n}$ is $d$-monotone. In fact,  the following stronger property holds 
 \begin{equation}
 \label{eq:d-mon-extra}
 \begin{array}{cl} \vspace{0.1cm}
     b_{ij} \ge 2 \,b_{i,j+1}   &(1\le i\le j \le n-1), \\ \vspace{0.1cm}
     b_{ij} \ge 2 \,b_{i,j-1}    &(2\le j\le i \le n).
 \end{array}    
 \end{equation}
\end{theorem}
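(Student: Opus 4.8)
The plan is to compute the columns of $B_{P_n}$ explicitly via Cramer's rule, exploiting the tridiagonal structure of $\tilde{L}_{P_n}$. Fix a column index $j$ and let $x$ be column $j$ of $B_{P_n}$, so $\tilde{L}_{P_n}\,x=e_j$. By Cramer's rule, $b_{ij}=x_i=\det(N_i)/\det(\tilde{L}_{P_n})$, where $N_i$ is $\tilde{L}_{P_n}$ with its $i$'th column replaced by $e_j$. Because the matrix is tridiagonal, expanding this determinant along the replaced column shows that $\det(N_i)$ factors as a product of two determinants of the ``truncated'' tridiagonal blocks lying on either side of the point where $e_j$ sits, and these blocks are exactly of the type $M_k$ or $\tilde M_k$ introduced before the theorem (the appearance of the entry $3$ versus $2$ at the cut comes from the interior vertices having degree-plus-one equal to $3$). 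Concretely, I expect a formula of the shape
\begin{equation}
\label{eq:cramer-form}
 b_{ij}=\frac{\det(B^{\,\text{left}}_{i,j})\,\det(B^{\,\text{right}}_{i,j})}{\det \tilde{L}_{P_n}}\qquad (i\le j),
\end{equation}
where, for $i\le j$, the left block has size $i-1$ and is built from the first $i-1$ rows/columns (ending in a degree-$3$ diagonal entry, hence of $\tilde M$-type after accounting for the boundary at vertex $1$), and the right block has size $n-i$ and is built from the last $n-i$ rows/columns but with the entry at the cut changed to $3$, i.e. it is $M_{n-i}$ up to the boundary correction at vertex $n$; a symmetric statement holds for $i\ge j$ by the symmetry of $B_{P_n}$.

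Once such a product formula is in hand, the monotonicity inequalities \eqref{eq:d-mon-extra} reduce to comparing consecutive values of $i$ (for fixed $j$). Moving from $b_{i,j+1}$ to $b_{ij}$ with $i\le j$, or more precisely comparing $b_{ij}$ and $b_{i,j+1}$ along a row, amounts to comparing the two products in \eqref{eq:cramer-form}; one of the two factors stays the same while the other changes from $\det M_{k-1}$ (or $\det \tilde M_{k-1}$) to $\det M_k$, and Lemma~\ref{lem:det-recursive} gives exactly $\det M_k>2\det M_{k-1}$, which yields the factor-of-$2$ gap. The plain $d$-monotonicity (strict decrease off the diagonal) then follows a fortiori, since $b_{i,j+1}>0$ implies $b_{ij}\ge 2b_{i,j+1}>b_{i,j+1}$; one must also check the diagonal is the unique maximum in its row, which is the $i=j$ boundary case of the same inequalities read from both sides. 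Positivity of all $b_{ij}$ (needed to turn $\ge 2b$ into a strict inequality) is already guaranteed since $\tilde{L}_{P_n}$ is an M-matrix.

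The main obstacle I anticipate is bookkeeping the boundary corrections cleanly: the diagonal of $\tilde{L}_{P_n}$ is $2$ at the two ends and $3$ in the interior, whereas the blocks $M_k,\tilde M_k$ were defined with one end ``promoted'' to $3$. So I need to be careful about which block type ($M$, $\tilde M$, or genuinely $\tilde{L}_{P_k}$) appears as each factor depending on whether $j=i$, $j>i$, or the block abuts vertex $1$ or vertex $n$; in particular when $i=j$ the unit vector $e_j$ sits on the diagonal and the left and right blocks are $\tilde{L}_{P_{i-1}}$-like and $\tilde{L}_{P_{n-i}}$-like respectively, while when $j>i$ the right block picks up the promoted $3$ at its first diagonal position. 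A second, minor obstacle is handling the degenerate sizes (empty blocks, size $1$) so that the recursion Lemma~\ref{lem:det-recursive} applies with the stated base cases $M_1=[2]$, $\det M_2=5$. With the determinant factorization set up correctly, the rest is a short application of Lemma~\ref{lem:det-recursive} and symmetry of $B_{P_n}$.
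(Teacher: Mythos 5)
Your proposal is essentially the paper's own proof: Cramer's rule for the tridiagonal system, a block-triangular factorization of the relevant cofactor into determinants of the truncated blocks $\tilde M_{i-1}$ and $M_{n-j}$, and then Lemma~\ref{lem:det-recursive} to extract the factor of $2$. One index in your sketched product formula needs correcting before the argument closes: for $i\le j$ the right-hand factor is $\det M_{n-j}$ (a block of size $n-j$), not $\det M_{n-i}$ --- as written your product is independent of $j$ and could not separate $b_{ij}$ from $b_{i,j+1}$ --- and the piece you omitted in the middle is a $(j-i)\times(j-i)$ triangular block with $-1$'s on its diagonal, whose sign $(-1)^{j-i}$ cancels against the cofactor sign $(-1)^{i+j}$; with that bookkeeping fixed, your comparison of consecutive entries via $\det M_k>2\det M_{k-1}$ is exactly the paper's argument.
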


\begin{proof}
 Consider row $i$ of $B_{P_n}$. For simplicity let us just write $\tilde{L}_n=\tilde{L}_{P_n}$. From the matrix equation $\tilde{L}_{n} B_{P_n}=I_n$, the $i$'th column of $B_{P_n}$, call it $x$, satisfies $\tilde{L}_n x=e_i$ where $e_i$ is the $i$'th unit vector. As $B_{P_n}$ is symmetric, row $i$ of $B_{P_n}$ equals $x^t$ (the transpose of $x$). So, we want to solve  $\tilde{L}_n x=e_i$. Let the $j$'th component of $x$ be denoted by $x_j$ and let $\tilde{L}_n[j;e_i]$ be the matrix obtained from $\tilde{L}_n$ by replacing the $j$'th column by $e_i$.  By Cramer's rule
 \[
      x_j=\frac{\det \tilde{L}_n[j;e_i]}{\det \tilde{L}_n}.
 \]
Consider $j \geq i$. Then 
\[
(\tilde{L}_n[j;e_i])_{[ij]}=
\left[
\begin{array}{ccc}
\tilde{M}_{i-1} & * & * \\ \vspace{0.2cm}
 O    & A_2 & * \\ \vspace{0.2cm}
 O    & O & M_{n-j}
\end{array}
\right],
\]
where $A_2$ is an upper triangular $(j-i) \times (j-i)$  matrix with each diagonal element equal to $-1$ (this matrix is void is $j=i$), and $*$ indicates some submatrix whose entries play no role here. \\

%

We obtain
\[
 \begin{array}{ll} \vspace{0.2cm}
   \det \tilde{L}_n[j;e_i] &= 
   (-1)^{i+j} \cdot \det (\tilde{L}_n[j;e_i])_{[ij]} \\ \vspace{0.2cm}
   &=(-1)^{i+j} (-1)^{j-i} \det M_{i-1}   \det M_{n-j} \\ \vspace{0.2cm}
   &= \det M_{i-1}   \det M_{n-j} \\ \vspace{0.2cm}
   &\ge \det M_{i-1}  \cdot 2 \det M_{n-(j+1)} \\ \vspace{0.2cm}
   &=2\det \tilde{L}_n[j+1;e_i] 
 \end{array}, 
\]
due to Lemma \ref{lem:det-recursive}. This implies that $x_j\ge 2x_{j+1}$. (Note that $\det \tilde{L}_n>0$ as this matrix is positive definite.) Repeating this argument we get the first inequality in (\ref{eq:d-mon-extra}). 
  Finally,  the proof of the second inequality in (\ref{eq:d-mon-extra}), for the case  $j \le i$, is very similar, so we omit it. 
\end{proof}

The inequalities in (\ref{eq:d-mon-extra}) show that the entries in each row of $B_{P_n}$ decay exponentially fast when one moves away from the diagonal, with a decay factor of at least $2$.

Some inequalities in (\ref{eq:d-mon-extra}) hold with equality as we have 
\[
\begin{array}{rcl} \vspace{0.2cm}
   b_{i,n-1}&=2\,b_{in}   &(1\le i \le n-1), \\  \vspace{0.2cm}
   b_{i2}&=2\,b_{i1}   &(2\le i \le n). 
\end{array}
\]   
In fact, these equations are a consequence of a result below (Lemma \ref{lem:pendant}).

We shall determine an $LU$ factorization of the matrix $\tilde{L}_{P_n}$ for the path $P_n$. Next we use this factorization to compute the matrix $B_{P_n}=\tilde{L}_{P_n}^{-1}$.

Note that $\tilde{L}_{P_n}$ is a tridiagonal matrix. Explicit formulas for the elements of the inverse of a general tridiagonal matrix can be seen for instance in \cite{Fonseca, Lewis, Mallik}. The approach in \cite{Mallik} is based on linear difference equations.  In \cite{Mikkawy}, the Doolitle $ LU$ factorization is studied and the author presented an algorithm to study the inverse. Using this approach, and considering $\tilde{L}_{P_n} = L U$, we present here explicitly the entries of the matrices $L$ and $U$ whose entries will be expressed as quotients of Fibonacci numbers. \\

We first find the $L_1U$ factorization of $\tilde{L}_{P_n}.$ Before that let us recall the Fibonacci sequence. 
Let $f_n$ denote the $n$'th Fibonacci number. So $f_1=f_2=1$ and 
\begin{equation} \label{Fibonacci}
f_n=f_{n-1}+f_{n-2},
\end{equation}
for each $n\ge 3$, and an explicit formula for $f_n$ is well known. 

%

\begin{proposition} Let $P_n$ be a path with $n$ vertices. Then, the $L_1U$ factorization of $\tilde{L}_{P_n}$ is, 
$\tilde{L}_{P_n} = L_1 U$, where 
\begin{eqnarray} \label{LandU}
L_1= \left[
  \begin{array}{ccccc}
  1    & 0      &            & \cdots   & 0 \\
  x_1  & 1      &            &          & 0 \\
  0    & x_2    & 1          &          & 0 \\
\vdots &        & \ddots     & \ddots   & \vdots \\
0      & 0      & 0          & x_{n-1} & 1
\end{array}
  \right], \;
U= \left[
  \begin{array}{ccccc}
  y_1    & -1      &            & \cdots     & 0 \\
  0      & y_2     & -1          &             & 0 \\
\vdots & \vdots & \ddots     &              & \vdots \\
0      & 0      &            &     y_{n-1}  &  -1\\
0      & 0      & 0          & 0            & y_n
\end{array}
  \right]
\end{eqnarray}
with 

\[
\begin{array}{ll}
 (i) & x_1  =  -1/2; \, y_1 =  2, \\
  (ii) &x_{i}=   - f_{2i-1}/f_{2i+1}, \mbox{ and} \;\; 
        y_i  =   f_{2i+1}/f_{2i-1} \;\mbox{ for $2 \leq i \leq n-1$,}\\
(iii) &y_n  =   f_{2n}/f_{2n-1}.
\end{array}
\]
\end{proposition}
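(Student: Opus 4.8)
The plan is to verify the claimed $L_1U$ factorization directly by multiplying $L_1$ and $U$ and checking that the product equals $\tilde{L}_{P_n}$, treating the tridiagonal structure one diagonal at a time. Since $L_1$ is lower bidiagonal (unit diagonal, subdiagonal $x_i$) and $U$ is upper bidiagonal (diagonal $y_i$, superdiagonal $-1$), the product $L_1U$ is tridiagonal, and its entries are: the $(1,1)$ entry is $y_1$; the $(i,i)$ entry for $i \ge 2$ is $x_{i-1}\cdot(-1) + y_i = y_i - x_{i-1}$; the $(i,i+1)$ entry is $-1$; and the $(i+1,i)$ entry is $x_i y_i$. So the factorization holds if and only if the following three families of scalar identities are satisfied: (a) $y_1 = 2$ and $y_i - x_{i-1} = 3$ for $2 \le i \le n-1$ and $y_n - x_{n-1} = 2$ (matching the diagonal of $\tilde L_{P_n}$, which is $2,3,\dots,3,2$); (b) $x_i y_i = -1$ for all $1 \le i \le n-1$ (matching the subdiagonal entries $-1$); the superdiagonal $-1$ is automatic.

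So the proof reduces to checking these identities with the stated values. First, $(i)$ gives $y_1 = 2$ and $x_1 y_1 = (-1/2)(2) = -1$, so the $n=1$ and $n=2$ base cases and the top-left corner are immediate. For $2 \le i \le n-1$, with $x_i = -f_{2i-1}/f_{2i+1}$ and $y_i = f_{2i+1}/f_{2i-1}$, we get $x_i y_i = -1$ trivially. The substantive identity is the diagonal recursion: for $2 \le i \le n-1$ we need $y_i - x_{i-1} = 3$, i.e.
\[
\frac{f_{2i+1}}{f_{2i-1}} + \frac{f_{2i-3}}{f_{2i-1}} = 3,
\]
which is equivalent to $f_{2i+1} + f_{2i-3} = 3 f_{2i-1}$. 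This is a standard Fibonacci identity: using $f_{2i+1} = f_{2i} + f_{2i-1}$ and $f_{2i} = f_{2i-1} + f_{2i-2}$ and $f_{2i-2} = f_{2i-1} - f_{2i-3}$, one gets $f_{2i+1} = 3f_{2i-1} - f_{2i-3}$. For the case $i=2$, one must separately confirm $y_2 - x_1 = f_5/f_3 + 1/2 = 5/2 + 1/2 = 3$, which matches since $x_1 = -1/2$ is defined by the special formula rather than the Fibonacci one (note $-f_1/f_3 = -1/2$, so the formula in $(ii)$ actually agrees with $(i)$ for $x_1$ as well, making the transition seamless). Finally the bottom corner: $y_n - x_{n-1} = f_{2n}/f_{2n-1} + f_{2n-3}/f_{2n-1} = (f_{2n} + f_{2n-3})/f_{2n-1}$, and we need this to equal $2$, i.e. $f_{2n} + f_{2n-3} = 2 f_{2n-1}$; this follows from $f_{2n} = f_{2n-1} + f_{2n-2}$ and $f_{2n-2} = f_{2n-1} - f_{2n-3}$.

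An alternative, perhaps cleaner, route is induction on $n$ combined with the standard observation that in Gaussian elimination of a tridiagonal matrix the pivots satisfy $y_1 = a_{11}$ and $y_i = a_{ii} - a_{i,i-1}a_{i-1,i}/y_{i-1} = a_{ii} - 1/y_{i-1}$, with $x_i = a_{i+1,i}/y_i = -1/y_i$. Then one shows by induction that $y_i = f_{2i+1}/f_{2i-1}$ for $i \le n-1$: the recursion becomes $y_i = 3 - 1/y_{i-1} = 3 - f_{2i-3}/f_{2i-1} = (3f_{2i-1} - f_{2i-3})/f_{2i-1} = f_{2i+1}/f_{2i-1}$ using the same identity as above, with base case $y_1 = 2 = f_3/f_1$. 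For the last pivot, $y_n = 2 - 1/y_{n-1} = 2 - f_{2n-3}/f_{2n-1} = (2f_{2n-1} - f_{2n-3})/f_{2n-1} = f_{2n}/f_{2n-1}$, again by the identity above. I expect no real obstacle here; the only thing to be careful about is the boundary behaviour at both ends of the path (the two diagonal entries equal to $2$ rather than $3$), which is exactly what forces the distinct formulas in $(i)$ and $(iii)$, and matching the indexing of the Fibonacci subscripts to the matrix indices.
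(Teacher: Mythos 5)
Your proof is correct and follows essentially the same route as the paper: both reduce the factorization to the entrywise identities $y_1=2$, $x_iy_i=-1$, $y_i-x_{i-1}=3$ (and $=2$ at the bottom corner), and both rest on the same Fibonacci identity $f_{2i+1}=3f_{2i-1}-f_{2i-3}$. Your ``alternative route'' via the pivot recursion $y_i=3-1/y_{i-1}$ is in fact exactly the paper's argument, which derives the formulas forward rather than verifying them backward.
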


\begin{proof}
Suppose that $\tilde{L}_{P_n}=L_{1}U$, with $L_{1}$ and $U$ as in (\ref{LandU}).  Analyzing the equality of the entries $\tilde{L}_{P_n}=L_{1}U$ we get from:
\[
\begin{array}{lll}
\mbox{\rm{row} }1:    &        y_1 = 2 ; &                 \\
\mbox{\rm{row} }i: & -1 = x_{i-1} y_{i-1}; & 3 = -x_{i-1} +y_{i}, \mbox{\, for\;} 2\leq i  \leq n-1;  \\
\mbox{\rm{row} }n:  & -1 = x_{n-1} y_{n-1}; & 2 = -x_{n-1} +y_{n}.
\end{array}
\]

\noindent Then, from $y_1=2$, we get $x_1= -1/2.$ Thus, 
$y_2 = 3+ x_1 = f_5/f_3$, and $x_2 =- 1/y_2= -f_3/f_5.$
Repeating recursively the process, we obtain:

For $2 \leq i \leq n-1$, 
\[
x_{i} =  - f_{2i-1}/f_{2i+1}, 
\]
and, using formula (\ref{Fibonacci})
\[
\begin{array}{ll} \vspace{0.2cm}
  y_i &= 3 + x_{i-1} = 3 - f_{2i-3}/f_{2i-1}= (3 f_{2i-1} -f_{2i-3})/f_{2i-1}\\ 
      &=(f_{2i-1}+f_{2i})/f_{2i-1}= f_{2i+1}/f_{2i-1},
\end{array}
\]
and 
\[
\begin{array}{ll}\vspace{0.2cm}
 y_n &= 2 + x_{n-1} = 2 - f_{2n-3}/f_{2n-1} 
 = (2 f_{2n-1} -f_{2n-3})/f_{2n-1}\\
     &= (f_{2n-1}+f_{2n-2})/f_{2n-1}= f_{2n}/f_{2n-1}.  
\end{array}
\]
\end{proof}

\begin{example}
{\rm 
For $n=4$, $\tilde{L}_{P_4}=L_{1}U$, with $L_1$ and $U$ as below. 
\[\left[\begin{array}{rrrr}
2&-1&0&0\\
-1&3&-1&0\\
0&-1&3&-1\\
0&0&-1&2
\end{array}
\right]=\left[\begin{array}{rrrr}
1&0&0&0\\
-\frac{f_1}{f_3}&1&0&0\\
0&-\frac{f_3}{f_5}&1&0\\
0&0&-\frac{f_5}{f_7}& 1
\end{array}
\right]\left[\begin{array}{rrrr}
\frac{f_3}{f_1}&-1&0&0\\
0&\frac{f_5}{f_3}&-1&0\\
0&0&\frac{f_7}{f_5}&-1\\
0&0&0&\frac{f_8}{f_7}
\end{array}
\right].
\]
}
\end{example} \endproof

Next we determine the inverses of $L$ and  $U$, showing nice expressions of these entries in terms of quotients of Fibonacci numbers.



\begin{proposition}
Let $U$ and $L_1$ be as in $(\ref{LandU})$. Then 
\begin{equation*}
U^{-1}= 
\left[ \begin{array}{ccccc}
\frac{f_1}{f_3} & \frac{f_1}{f_5}& \cdots & \frac{f_1}{f_{2n-1}} & \frac{f_1}{f_{2n}}\\
0 & \frac{f_3}{f_5}& \cdots&\frac{f_3}{f_{2n-1}} & \frac{f_3}{f_{2n}}\\
0 & 0 &  \ddots  & \vdots\\
0 & 0 & \ddots   & \frac{f_{2n-3}}{f_{2n-1}} & \frac{f_{2n-3}}{f_{2n}}\\
0 & 0 &  &  0 & \frac{f_{2n-1}}{f_{2n}}\\
\end{array}
\right] \mbox{\, and\, }
L_1^{-1}= 
\left[ \begin{array}{ccccc}
1 & 0&0&  \cdots   & 0\\
\frac{f_1}{f_3} & 1& 0& \cdots    & 0\\
\frac{f_1}{f_5} & \frac{f_3}{f_5}&1 & \cdots & 0\\
\vdots & \vdots & &\ddots   & \\
\frac{f_1}{f_{2n-1}} & \frac{f_{3}}{f_{2n-1}}  &\cdots &   \frac{f_{2n-3}}{f_{2n-1}}& 1\\
\end{array}
\right].
\end{equation*}

\end{proposition}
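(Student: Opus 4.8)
The plan is to use the fact that $L_1$ and $U$ are \emph{bidiagonal}, so each inverse is a (full) triangular matrix whose entries are products of ratios of the pivots, and those products telescope because of the Fibonacci form of $x_i$ and $y_i$ established in the previous proposition.

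I would start with $U$. Writing $U=[u_{ij}]$ with $u_{ii}=y_i$ and $u_{i,i+1}=-1$, a one-line back-substitution (or just unrolling the identity $UU^{-1}=I$ along a column) gives $(U^{-1})_{ij}=0$ for $i>j$, $(U^{-1})_{ii}=1/y_i$, and, because \emph{every} superdiagonal entry of $U$ equals $-1$, the recursion $(U^{-1})_{ij}=(U^{-1})_{i+1,j}/y_i$ for $i<j$; hence $(U^{-1})_{ij}=1/(y_i y_{i+1}\cdots y_j)$ for all $i\le j$, with no residual sign. Now substitute $y_k=f_{2k+1}/f_{2k-1}$ for $k\le n-1$ (this also covers $k=1$, since $f_1=1,\ f_3=2$) and $y_n=f_{2n}/f_{2n-1}$. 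For $i\le j\le n-1$ the product telescopes, $\prod_{k=i}^{j}\frac{f_{2k-1}}{f_{2k+1}}=\frac{f_{2i-1}}{f_{2j+1}}$, while for the last column one extra factor $f_{2n-1}/f_{2n}$ converts $f_{2i-1}/f_{2n-1}$ into $f_{2i-1}/f_{2n}$. These are exactly the entries displayed for $U^{-1}$. An equivalent, bookkeeping-free check is to verify $U\,(U^{-1})=I$ directly: the $(i,i)$ entry is $y_i\cdot f_{2i-1}/f_{2i+1}=1$ since $y_if_{2i-1}=f_{2i+1}$, and for $j>i$ the $(i,j)$ entry is $y_i\,(f_{2i-1}/f_{2j+1})-(f_{2i+1}/f_{2j+1})=0$, with the last column handled the same way using $y_nf_{2n-1}=f_{2n}$.

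For $L_1$ I would proceed identically: its inverse is lower triangular with $(L_1^{-1})_{ii}=1$ and, from unrolling $L_1L_1^{-1}=I$, $(L_1^{-1})_{ij}=(-1)^{i-j}x_jx_{j+1}\cdots x_{i-1}$ for $i>j$; substituting $x_k=-f_{2k-1}/f_{2k+1}$ the $(-1)$'s cancel and the product telescopes to $f_{2j-1}/f_{2i-1}$, which is the displayed matrix. A quicker variant that derives one formula from the other: since $y_ix_i=-1$, one checks $U=D\,L_1^{t}$ with $D=\diag(y_1,\dots,y_n)$, whence $U^{-1}=(L_1^{-1})^{t}D^{-1}$ and therefore $L_1^{-1}=D\,(U^{-1})^{t}$, i.e.\ $(L_1^{-1})_{ij}=y_i\,(U^{-1})_{ji}$; plugging in the formula for $U^{-1}$ reproduces $f_{2j-1}/f_{2i-1}$ (including the row $i=n$, where $y_nf_{2j-1}/f_{2n}=f_{2j-1}/f_{2n-1}$).

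I do not expect a genuine obstacle; the content is a routine verification. The only points needing care are: (a) remembering that the inverse of a bidiagonal matrix is \emph{full} triangular, not bidiagonal; (b) treating the last pivot $y_n=f_{2n}/f_{2n-1}$ — hence the last column of $U^{-1}$ and, dually, the last row of $L_1^{-1}$ — separately, since its shape differs from the generic $y_k=f_{2k+1}/f_{2k-1}$; and (c) checking that the generic formulas specialize correctly at the corner ($x_1=-1/2=-f_1/f_3$, $y_1=2=f_3/f_1$), so that no separate first-row/column argument is required. The telescoping itself needs no Fibonacci identity beyond consecutive cancellation — the recurrence $f_{2i+1}=f_{2i}+f_{2i-1}$ was already absorbed into the expressions for $x_i,y_i$.
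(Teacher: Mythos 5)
Your proposal is correct and is essentially the paper's argument: the paper's proof simply states that one verifies $UU^{-1}=I$ and $L_1L_1^{-1}=I$ directly, which is exactly the verification you carry out (your back-substitution/telescoping derivation and the observation $U=DL_1^{t}$ are just more explicit versions of the same computation). All the entry formulas and the special handling of $y_n=f_{2n}/f_{2n-1}$ check out.
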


\begin{proof}
 The proof follows directly computing $UU^{-1}= I$ and $L_{1} L_{1}^{-1}= I$.
\end{proof}

We recall that we can also determine each column of the matrix $B_{P_n}$ using this decomposition solving the system $B_{P_n}x= e_i,$ which is equivalent to $(LU)x=e_i.$ We can find $x$ using the equations $Lz= e_i,$ and $Ux=z$, solving first $Lz=e_i$ to obtain $z$ and then $ Ux=z,$ to obtain $x$.

\section{Mononicity more generally}

We now consider more general graphs than paths, and we will see that several monotonicity  properties may be generalized to almost as strong inequalities. 
We also show  that some inequalities in (\ref{eq:d-mon-extra}) hold with equality. In fact, this is a consequence of the next result which is true for pendant vertices in general graphs.

\begin{lemma}
 \label{lem:pendant}
 Assume the graph $G$ has a pendant vertex $j$ which is incident to vertex $k$. Then the  inverse matrix $B_G=[b_{ij}]$ satisfies 
 \begin{equation}
 \label{eq:B_pendant}
 \begin{array}{cl} \vspace{0.1cm}
     b_{ik} = 2 \,b_{ij}   &(1 \le i \le n, \,i \not = j).
 \end{array}    
 \end{equation}
\end{lemma}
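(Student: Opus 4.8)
The plan is to work directly with the Laplacian equations $\tilde{L}_G x = e_i$ from \eqref{eq:Laplace_eq}, exploiting the special structure of the row of $\tilde{L}_G$ indexed by the pendant vertex $j$. Since $j$ is pendant and incident only to $k$, the vertex $j$ has degree $1$, so the diagonal entry $(\tilde{L}_G)_{jj} = \deg(j) + 1 = 2$, and the only off-diagonal nonzero entry in row $j$ is $(\tilde{L}_G)_{jk} = -1$. Fix $i \neq j$ and let $x$ denote the $i$'th column of $B_G$, so that $\tilde{L}_G x = e_i$. Reading off the $j$'th equation of this system gives $2 x_j - x_k = (e_i)_j = 0$ (using $i \neq j$), hence $x_k = 2 x_j$. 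Since $b_{ik} = x_k$ and $b_{ij} = x_j$ (and $B_G$ is symmetric, so these also equal the corresponding entries read along row $i$), this is exactly \eqref{eq:B_pendant}.

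First I would recall/restate that $B_G = \tilde{L}_G^{-1}$ exists because $\tilde{L}_G$ is symmetric positive definite, and that column $i$ of $B_G$ solves $\tilde{L}_G x = e_i$. Second, I would identify the $j$'th row of $\tilde{L}_G$ precisely: all entries are zero except the diagonal entry $2$ (degree $1$ plus $1$ from the added identity) and the entry $-1$ in column $k$. Third, I would extract the scalar equation corresponding to index $j$ from $\tilde{L}_G x = e_i$, namely $\sum_\ell (\tilde{L}_G)_{j\ell}\, x_\ell = (e_i)_j$, which simplifies to $2 x_j - x_k = 0$ since $i \neq j$. Rearranging yields $x_k = 2 x_j$, i.e. $b_{ik} = 2 b_{ij}$ for all $i \neq j$, which completes the proof.

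There is essentially no hard obstacle here; the entire argument is a one-line consequence of reading off a single trivial equation from the linear system. The only point requiring a small amount of care is making sure the right-hand side entry $(e_i)_j$ is indeed $0$, which is exactly where the hypothesis $i \neq j$ is used, and noting that the symmetry of $B_G$ lets us phrase the conclusion in terms of entries of either a row or a column indexed by $i$. I would also remark in passing that applying this to $P_n$, where vertex $1$ is pendant at vertex $2$ and vertex $n$ is pendant at vertex $n-1$, recovers the equality cases $b_{i2} = 2 b_{i1}$ and $b_{i,n-1} = 2 b_{in}$ mentioned just before the lemma, tying it back to Theorem~\ref{thm:B_monotone}.
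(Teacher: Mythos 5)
Your proof is correct and follows exactly the same route as the paper: read off the $j$'th Laplacian equation of $\tilde{L}_G x = e_i$ for the pendant vertex $j$, obtaining $2x_j - x_k = 0$ and hence $x_k = 2x_j$. The only difference is that you spell out the structure of row $j$ of $\tilde{L}_G$ and the role of the hypothesis $i \neq j$ more explicitly than the paper does, which is fine.
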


\begin{proof}
 Consider as above the column $i$ in both sides of the matrix equation $\tilde{L}_n B_G=I_n$,  so $\tilde{L}_n x=e_i$. Then row $i$ of $B_G$ equals $x^t$. The equation corresponding to vertex $v_j$ is 
 \[
    2 x_j-x_k=0,
 \]
so $x_k=2x_j$, and therefore $b_{ik} = 2 \,b_{ij}$.
\end{proof}

\begin{example} \label{inverseforpath}
{\rm We  compute 
\[
B_{P_4}=\tilde{L}_{P_4} ^{-1}=(1/21)\left[ \begin{array}{cccc}
13&5&2& 1\\
5&10&4&2\\
2&4&10&5\\
1&2&5&13
\end{array}
\right].
\]
Then, it is easy to see that $b_{13}= 2 b_{14},\; b_{23}= 2 b_{24}$ and $b_{32}= 2 b_{31},\; b_{42}= 2 b_{41}.$ \hfill{$\diamond$}
}
\end{example} 

\bigskip
We now generalize the monotonicity result (Theorem \ref{thm:B_monotone}) above to general trees.

\begin{theorem}
 \label{thm:tree_B_monotone}
  Let $T$ be a tree and let $B_T=[b_{ij}]=\tilde{L}_T^{-1}$. Let $v_i$ be a vertex and consider a path in $T$
  \[
    P=v_{i_0}, v_{i_1}, v_{i_2}, \ldots, v_{i_k}
  \]
  where $i_0=i$ and $v_{i_k}$ is a pendant vertex. Then row $i$ in $B_T$ 
   satisfies the following monotonicity property
  \begin{equation}
 \label{eq:tree-d-mon}
 \begin{array}{cl} \vspace{0.1cm}
     b_{i,i_t} \ge 2 \,b_{i,i_{t+1}}   &(0\le t\le k-1). 
 \end{array}    
 \end{equation}
\end{theorem}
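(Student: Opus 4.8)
The plan is to mimic the path argument of Theorem~\ref{thm:B_monotone}, replacing the tridiagonal determinant calculus with a rooting/branch-pruning argument adapted to the tree structure. Fix the vertex $v_i$ and the path $P = v_{i_0}, \ldots, v_{i_k}$ with $i_0 = i$ and $v_{i_k}$ pendant. As in the earlier proofs, let $x$ be column $i$ of $B_T$, so that $\tilde{L}_T x = e_i$ and, by symmetry, row $i$ of $B_T$ equals $x^t$; it suffices to show $x_{i_t} \ge 2 x_{i_{t+1}}$ for $0 \le t \le k-1$. Since every $x_j = b_{ij} > 0$, the inequalities are genuinely about the relative decay along $P$ as one walks away from $v_i$ toward the leaf.

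First I would handle the final step $t = k-1$: the equation of the Laplacian system at the pendant vertex $v_{i_k}$ reads $2 x_{i_k} - x_{i_{k-1}} = 0$, which is exactly Lemma~\ref{lem:pendant} and gives $x_{i_{k-1}} = 2 x_{i_k}$, in particular $x_{i_{k-1}} \ge 2 x_{i_k}$. For the remaining indices I would argue by (downward) induction on $t$, or equivalently use a pruning/edge-deletion identity. The cleanest route seems to be: for $0 \le t \le k-1$, delete the edge $v_{i_t} v_{i_{t+1}}$; this splits $T$ into two subtrees, $T'$ containing $v_i$ (hence $v_{i_0}, \ldots, v_{i_t}$) and $T''$ containing the tail $v_{i_{t+1}}, \ldots, v_{i_k}$. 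I would express $x_{i_t}$ and $x_{i_{t+1}}$ via Cramer's rule on $\tilde{L}_T x = e_i$, expanding the relevant determinants along the cut edge so that cofactors factor as (something supported on $T'$-side)$\times$(determinant of a modified Laplacian of $T''$), analogous to how $\det M_{i-1}\det M_{n-j}$ appeared in Theorem~\ref{thm:B_monotone}. The ratio $x_{i_t}/x_{i_{t+1}}$ should then reduce to a ratio of determinants of the form $\det(\tilde{L}_{T''} + e_{i_{t+1}} e_{i_{t+1}}^t)$ versus $\det \tilde{L}_{T''_{-}}$, where $T''_{-}$ is $T''$ with the leaf $v_{i_{t+1}}$ removed along the path — and the inequality $\ge 2$ would follow from a tree analogue of Lemma~\ref{lem:det-recursive}, namely that attaching one more pendant vertex to a graph, and raising the attachment vertex's diagonal by $1$, more than doubles the determinant of $\tilde{L}$. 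That determinant monotonicity is easy to prove directly by a two-row Laplace expansion exactly as in Lemma~\ref{lem:det-recursive}, using that all principal submatrices of these M-matrices have positive determinant.

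An alternative, possibly slicker, approach avoids determinants entirely: root $T$ at $v_i$ and prove the inequality locally from the Laplacian equations. At vertex $v_{i_{t+1}}$ (for $t \le k-1$) the equation is $(1 + \deg v_{i_{t+1}}) x_{i_{t+1}} = x_{i_t} + \sum_{u} x_u$, where the sum runs over neighbors $u$ of $v_{i_{t+1}}$ other than $v_{i_t}$; I would show that each such $x_u \le x_{i_{t+1}}$ (a "children are smaller than their parent" claim, itself provable by induction from the leaves using the analogous equations and the positivity/M-matrix structure), so that $(1 + \deg v_{i_{t+1}}) x_{i_{t+1}} \le x_{i_t} + (\deg v_{i_{t+1}} - 1) x_{i_{t+1}}$, which rearranges to $2 x_{i_{t+1}} \le x_{i_t}$ — precisely \eqref{eq:tree-d-mon}. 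The main obstacle in either route is the supporting monotonicity lemma: in the determinant route, proving that the pendant-attachment operation more than doubles $\det \tilde{L}$ and tracking the cofactor bookkeeping on a tree (where, unlike the path, the "off-path" cofactor is not simply triangular); in the local route, establishing rigorously the "child $\le$ parent" estimate $x_u \le x_{i_{t+1}}$ for all off-path neighbors, which needs a careful induction over the subtrees hanging off $P$ and a sign/positivity argument to ensure the inductive inequality propagates. I expect the local approach to be shorter to write, so I would pursue it, falling back on the determinant computation only if the positivity bookkeeping proves delicate.
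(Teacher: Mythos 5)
Your preferred ``local'' route is essentially the paper's own proof: the paper roots $T$ at $v_i$, takes the Laplacian equation at each vertex $v_k$, and proves by induction from the leaves (base case being Lemma~\ref{lem:pendant}) that every edge directed away from $v_i$ satisfies $x_p \ge 2 x_k$ — exactly your ``child $\le$ parent'' argument. Your slightly weaker induction hypothesis $x_u \le x_{i_{t+1}}$ does propagate (the local equation then yields $x_p \ge 2x_k \ge x_k$ one level up), so the argument closes correctly and the determinant/Cramer fallback you sketch is not needed.
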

\begin{proof}
 Let $x$ be column $i$ of $B_T$ and $x_j$ is the $j$'th component in $x$,  corresponding to vertex $v_j$. Then $x$ satisfies $\tilde{L}_Tx=e_i$. We  orient the edges of the tree such that $v_i$ is the root and every edge is directed away from the root. Such a directed edge is denoted by $(v_k,v_l)$, where $v_k$ is the tail and $v_l$ is the head. Then every non-root vertex has exactly one ingoing edge.
 
 We first establish a crucial property which is the basis for an induction proof, see Fig.~\ref{fig:subgr}. 
 
\smallskip
 {\em Observation:  Consider a vertex $v_k$, where $k\not=i$, with ingoing edge $(v_p,v_k)$ and outgoing edges 
 $(v_k,v_{j_r})$ for $r=1, 2, \ldots, s$ $($see Fig.~\ref{fig:subgr}$)$}.  Assume that $x_k \ge 2 x_{j_r}$ $(r \le s)$. Then $x_p \ge 2x_k$.
 
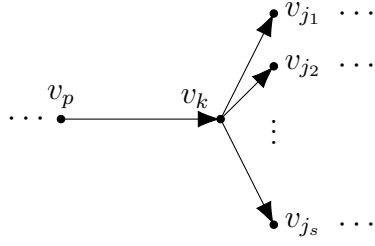
\begin{figure}

\centering
\begin{tikzpicture}[scale=0.7]
\coordinate (a) at (0,0);
\coordinate (b) at (3,0);
\coordinate (c) at (4,2);
\coordinate (d) at (4,1);
\coordinate (e) at (4,-2);
\coordinate (f) at (4,0.5);

\coordinate (g) at (5,2);
\coordinate (h) at (5,1);
\coordinate (i) at (5,-2);
\coordinate (j) at (0,0);

\draw (a) node[above]{$v_p$};
\draw (b) node[above left]{$v_k$};
\draw (c) node[right]{$v_{j_1}$};
\draw (d) node[right]{$v_{j_2}$};
\draw (f) node[below]{$\vdots$};
\draw (e)node[right]{$v_{j_s}$};

\draw (g)node[right]{$\cdots $};
\draw (h)node[right]{$\cdots $};
\draw (i)node[right]{$\cdots $};
\draw (j)node[left]{$\cdots $};

\draw [fill=black] (a) circle[radius=2pt];
\draw [fill=black] (b) circle[radius=2pt];
\draw [fill=black] (c) circle[radius=2pt];
\draw [fill=black] (d) circle[radius=2pt];
\draw [fill=black] (e) circle[radius=2pt];

\draw [-{Latex[length=3mm]}] (a)--(b) ;
\draw [-{Latex[length=3mm]}] (b)--(c) ;
\draw [-{Latex[length=3mm]}] (b)--(d) ;
\draw [-{Latex[length=3mm]}] (b)--(e) ;

\end{tikzpicture} 
\caption{Neighbors of vertex $v_k$.}
\label{fig:subgr}
\end{figure}

 \bigskip

 Proof of Observation: The equation in $\tilde{L}_Tx=e_k$ corresponding to vertex $v_k$  is 
 \[
     (s+2)x_k-x_p-\sum_{r=1}^s x_{j_r}=0.
 \]
  This gives 
  \[
  \begin{array}{rl} \vspace{0.2cm}
         x_p&=(s+2)x_k-\sum_{r=1}^s x_{j_r} \\ \vspace{0.2cm}
               &\ge (s+2)x_k-\sum_{r=1}^s (1/2)x_k \\ \vspace{0.2cm}
               &= (s/2+2)x_k  \\ \vspace{0.2cm}
               &\ge 2x_k
  \end{array}       
 \]
 which proves the observation. 
 
 Consider now a directed edge $(v_p,v_k)$ in $T$.  We say that this edge is {\em good} if $x_p \ge 2 x_k$.
 
\smallskip
 {\em Claim: Every directed edge $(v_p,v_k)$ in $T$ is good.}
 
 \smallskip
 Proof of Claim: If $(v_p,v_k)$ is such that $v_k$ is a pendant vertex, then, by Lemma
 \ref{lem:pendant}, $x_p = 2 x_k$, so $(v_p,v_k)$ is good. Next, consider an edge $(v_{\ell},v_p)$, such every outgoing edge of $v_p$ is good. Then, by the Observation, $x_\ell \ge  2 x_p$, so also the edge $(v_{\ell},v_p)$ is good. We continue like this, and can thereby continue until all edges are covered, and they are all good, as desired. 
 
 Finally, the Claim directly gives the monotonicity property (\ref{eq:tree-d-mon}), which completes the proof.
 \end{proof}

From this theorem we conclude that the entries in each row $i$ of $B_T$ drop off exponentially fast as a function of the (combinatorial) distance from the vertex $v_i$.  
 In the case of a path, from the proof of the Observation in the above proof, we have $x_p \geq \frac{5}{2} x_k.$ This lower bound is tighter to $x_p$ than the one found in Theorem \ref{thm:B_monotone}.

With these results in hand, we now turn our attention to the main diagonal of doubly stochastic matrices.

Consider a tree $T$. Define $d(v_i,v_j)$ as the (combinatorial) {\em distance} between vertices $v_i$ and $v_j$, i.e., the number of edges in the unique $v_iv_j$-path in $T$.

The next result provides a lower bound on the diagonal entries of doubly stochastic matrices constructed from trees, as below.

\begin{corollary}
 \label{cor:tree_B_monotone}
  Let $T$ be a tree with $n$ vertices and let $B_T=[b_{ij}]=\tilde{L}_T^{-1}$.  Then 
  \begin{equation}
   \label{eq:diag-L-ds}
     b_{ii} \ge \frac{1}{\sum_j 2^{-d(v_i,v_j)}}   \;\;\;\;(1\le i\le n). 
   \end{equation}
\end{corollary}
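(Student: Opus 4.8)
The plan is to combine the row-monotonicity bound from Theorem~\ref{thm:tree_B_monotone} with the fact that $B_T$ is doubly stochastic, i.e., the $i$'th row of $B_T$ sums to $1$. Fix a vertex $v_i$ and let $x$ be the $i$'th column (equivalently, by symmetry, the $i$'th row) of $B_T$; all entries are positive. The key observation is that for an arbitrary vertex $v_j$, the unique $v_iv_j$-path can be extended to a path $P = v_{i_0}, v_{i_1}, \ldots, v_{i_k}$ in $T$ ending at a pendant vertex, with $v_{i_0} = v_i$ and $v_j = v_{i_t}$ for $t = d(v_i, v_j)$. Applying \eqref{eq:tree-d-mon} repeatedly along this path from $v_i$ down to $v_j$ yields $b_{ii} = b_{i,i_0} \ge 2^{t} b_{i,i_t} = 2^{d(v_i,v_j)} b_{ij}$, hence
\[
  b_{ij} \le 2^{-d(v_i,v_j)}\, b_{ii} \qquad (1 \le j \le n),
\]
where the case $j=i$ holds trivially with equality.

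Summing this inequality over all $j$ and using $\sum_j b_{ij} = 1$ (row sums of a doubly stochastic matrix) gives
\[
  1 = \sum_{j=1}^n b_{ij} \le b_{ii} \sum_{j=1}^n 2^{-d(v_i,v_j)},
\]
and dividing by the (positive) sum $\sum_j 2^{-d(v_i,v_j)}$ produces exactly \eqref{eq:diag-L-ds}. That is the whole argument.

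The only genuine point requiring care is the extension step: one must check that every vertex $v_j$ of $T$ lies on some path starting at $v_i$ and terminating at a pendant vertex, so that Theorem~\ref{thm:tree_B_monotone} is actually applicable with $v_j$ as an intermediate vertex $v_{i_t}$. This is immediate for a tree: take the $v_iv_j$-path, and if $v_j$ is not already pendant, keep walking away from $v_i$ (always to a not-yet-visited neighbor, which exists until a leaf is reached) until a pendant vertex is hit; finiteness of $T$ guarantees termination, and acyclicity guarantees the result is still a path. I would state this in one sentence. A secondary (even more minor) remark is that \eqref{eq:tree-d-mon} bounds consecutive entries along the path and one simply iterates $t$ times; no new inequality is needed. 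I do not anticipate any real obstacle here — the corollary is a clean consequence of the preceding theorem and double stochasticity.
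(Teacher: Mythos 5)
Your proof is correct and follows essentially the same route as the paper: iterate the inequality of Theorem~\ref{thm:tree_B_monotone} along the $v_iv_j$-path to get $b_{ij}\le 2^{-d(v_i,v_j)}b_{ii}$, then sum over $j$ and use that the row sums of $B_T$ equal $1$. Your remark about extending the $v_iv_j$-path to a pendant vertex so that the theorem's hypothesis is literally satisfied is a small point of care that the paper's proof passes over silently, but it does not change the argument.
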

%
\begin{proof}
 We fix vertex $v_i$ as stated above. Consider a vertex $v_j$ and the (unique) $v_iv_j$-path
 \[
    v_{i_0}, v_{i_1}, v_{i_2}, \ldots, v_{i_k}
 \]
 where $k=d(v_i,v_j)$, and $i_0=i$, $i_k=j$. 
 Theorem \ref{thm:tree_B_monotone} with repeated application of the inequality (\ref{eq:tree-d-mon}) gives
 \[
      b_{ii} \ge 2 \,b_{i,i_1} \ge 2^2 \,b_{i,i_2} \ge \cdots \ge 2^k \,b_{i,i_k} .
 \]     
 Therefore 
 \[
    b_{ii} \ge 2^{d(v_i,v_j)} \,b_{ij} \;\;\;(1 \le j \le n).
 \]

 Since $B$ is doubly stochastic its row sums are 1, so
 \[
   1 = \sum_{j} b_{ij} \le \sum_j (2^{-d(v_i,v_j)}\, b_{ii})=b_{ii} \,\sum_j 2^{-d(v_i,v_j)}
 \]
 which gives the desired inequality.
\end{proof}
\medskip

\begin{example}
{\rm 
Let $G=Br(k, \ell)$ be the so-called broom tree which is obtained from a path $P_k$ with $k$ vertices by adding $\ell$ vertices and attaching each of these to the same end vertex of the path.
We let the vertices of $P_k$ be labeled $1, 2, \ldots, k$ and the remaining vertices are labeled  by $k+1, k+2, \ldots, k+\ell$. In Fig.~\ref{fig_broom_tree_revision} the graph $Br(6, 5)$ is shown.
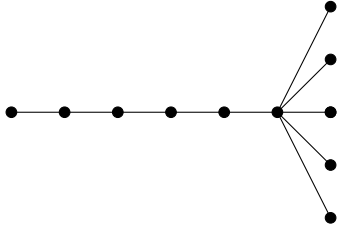
\begin{figure}[h]
\centering
\begin{minipage}{0.5\textwidth}
\centering

\begin{tikzpicture}[scale=0.7]
  \draw[fill] (0,0) circle(0.1cm);
  \draw[fill]  (1,0) circle (0.1cm);
  \draw[fill]  (2,0) circle (0.1cm);
  \draw[fill]  (3,0) circle (0.1cm);
  \draw[fill]  (4,0) circle (0.1cm);
   \draw[fill]  (5,0) circle (0.1cm);
  \draw[fill]  (6,0) circle (0.1cm);     
  \draw[fill]  (6,-1) circle (0.1cm);
  \draw[fill]  (6,0) circle (0.1cm);
 \draw[fill]  (6, 1) circle (0.1cm);
 \draw[fill]  (6, 2) circle (0.1cm);
 \draw[fill]  (6, -2) circle (0.1cm);

\draw(0,0) --(1,0);
\draw(1,0) --(2,0);
\draw(2,0) --(3,0);
\draw(3,0) --(4,0);
\draw(4,0) --(5,0);
\draw(5,0) --(6,0);
\draw (5,0)--(6,-1);
\draw (5,0)--(6,1);
\draw (5,0)--(6,2);
\draw (5,0)--(6,-2);

 \end{tikzpicture}
\caption{Broom tree $Br(6, 5)$.}
\label{fig_broom_tree_revision}
\end{minipage}
\end{figure}

The lower bound ($\ref{eq:diag-L-ds}$) in Corollary \ref{eq:diag-L-ds} for the diagonal entries of $B_G$ are 

\begin{eqnarray*}
b_{pp} & \geq & \frac{1}{3-(\frac{1}{2})^{p-1} +(\ell-2) (\frac{1}{2})^{k-p+1}
} \mbox{\, if \, \,} 1 \leq p\leq k,\\
b_{k+q, k+q}& \geq &\frac{1}{
\frac{7}{4} + \frac{\ell}{4} -(\frac{1}{2})^{k}}, \mbox{\, if \, \,} 1\leq q\leq \ell.
\end{eqnarray*}

Here the lower bound for $b_{pp}, 1\leq p\leq k$ comes from:
\begin{eqnarray*}    
\sum_j 2^{-d(v_p,v_j)}&= &
2^{-(p-1)}+ 2^{-(p-2)}+ \cdots+2^{-1}+2^0+ 2^{-1}+ 2^{-2}+ \cdots+ 2^{-(k-p)}+ \ell 2^{-(k-p+1)}\\
&=& 3-(\frac{1}{2})^{p-1}+ (\ell-2) \left(\frac{1}{2}\right)^{k-p+1}.\\
\end{eqnarray*}
Similarly, concerning the lower bound for $b_{k+q, k+q},$ with $1\leq q\leq \ell,$
we have 
\begin{eqnarray*}    
\sum_j 2^{-d(v_{p+q},v_j)}&= &
2^{-k}+ 2^{-(k-1)}+ \cdots+2^{-1}+2^0+ (\ell-1) 2^{-2}\\
&= & \frac{7}{4} -\left(\frac{1}{2}\right)^{k}+ \frac{\ell}{4}.
\end{eqnarray*}\hfill{$\diamond$}
}
    \end{example}

We now consider a general graph $G$. The goal is to establish a monotonicity property for the inverse matrix $B_G=\tilde{L}_G^{-1}$.  Let $d_j$ denote the degree of vertex $v_j$.

\begin{theorem}
 \label{thm:G_B_monotone}
  Let $G$ be a connected graph and let $B_G=[b_{ij}]=\tilde{L}_G^{-1}$. Consider a vertex $v_i$ in $G$. Let $v_j$ be a vertex different from $v_i$. 
  
  Then there exists a $v_jv_i$-path $P=v_j, v_{j_1}, v_{j_2}, \ldots, v_{j_k}, v_i$ such that the entries in row $i$ of $B_G$ are monotone in the following sense 
  \begin{equation}
    \label{eq:G-mon}
      b_{ij} < b_{i,j_1} < \cdots < b_{i,j_k} < b_{ii}. 
  \end{equation}

 In particular, $b_{ii}>b_{ij}$. So each diagonal element in $B_G$ is strictly largest in its row and column.
\end{theorem}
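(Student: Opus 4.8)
The plan is to argue by induction along a breadth-first tree (shortest-path tree) rooted at $v_i$, using the Laplacian equation $\tilde{L}_G x = e_i$ where $x$ denotes column $i$ of $B_G$ (equivalently, by symmetry, the transpose of row $i$). First I would recall that $x$ is entrywise positive, since $\tilde{L}_G$ is an M-matrix with connected $G$. The key structural tool is a ``local'' inequality analogous to the Observation in the proof of Theorem~\ref{thm:tree_B_monotone}: if $v_k \neq v_i$ has neighbours $v_{p_1}, \dots, v_{p_m}$ (so $d_k = m$), then the equation at $v_k$ reads
\[
  (d_k + 1) x_k - \sum_{r=1}^{m} x_{p_r} = 0,
\]
so $x_k = \frac{1}{d_k+1}\sum_r x_{p_r}$; that is, $x_k$ is a weighted average of its neighbours' values (with the extra ``$+1$'' only shrinking it). Hence $x_k$ is strictly smaller than the \emph{maximum} of its neighbours' values: $x_k < \max_r x_{p_r}$, because $x_k < \frac{1}{m}\sum_r x_{p_r} \le \max_r x_{p_r}$ unless all neighbours are equal, and even then the factor $\frac{m}{m+1} < 1$ forces strict inequality. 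So every non-root vertex has a neighbour with strictly larger $x$-value.

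Next I would build the path greedily \emph{backwards from} $v_j$: starting at $v_j \neq v_i$, repeatedly move to a neighbour with strictly larger $x$-value. This produces a sequence $v_j = v_{j_0}, v_{j_1}, v_{j_2}, \dots$ along which the $x$-values (equivalently the $b_{i,\cdot}$ entries) are strictly increasing. Because the sequence is strictly increasing it cannot revisit a vertex, so it is a genuine path and must terminate; it can only terminate at a vertex having no strictly-larger neighbour, and by the averaging fact above the only such vertex is $v_i$ itself. This yields a $v_jv_i$-path $P = v_j, v_{j_1}, \dots, v_{j_k}, v_i$ with $b_{ij} < b_{i,j_1} < \cdots < b_{i,j_k} < b_{ii}$, which is exactly \eqref{eq:G-mon}. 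The final assertion $b_{ii} > b_{ij}$ for all $j \neq i$ follows immediately by transitivity along this path, and the statement about columns follows because $B_G$ is symmetric.

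The main obstacle is making the termination argument clean: one must be sure the greedy walk is forced to end at $v_i$ and nowhere else. This is handled precisely by the strengthened local inequality — that $x_k$ is a \emph{strict} under-average of its neighbours, so \emph{every} vertex other than $v_i$ admits a strictly larger neighbour — combined with the observation that a strictly increasing walk on a finite graph cannot cycle and hence must halt. A minor point to check is the base of the walk when $v_j$ is itself adjacent to $v_i$: then the walk can step directly to $v_i$ in one move, giving the path $v_j, v_i$ with $b_{ij} < b_{ii}$, consistent with $k=0$. No positivity subtlety beyond $x > 0$ is needed, and that is already established earlier in the paper.
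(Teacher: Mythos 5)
Your proposal is correct and follows essentially the same route as the paper's proof: the same local inequality $x_k = \tfrac{1}{d_k+1}\sum_{r} x_{p_r} < \tfrac{1}{d_k}\sum_{r} x_{p_r}$ (valid since $x>0$) showing every non-root vertex has a strictly larger neighbour, followed by the same greedy strictly-increasing walk that cannot revisit vertices and can only terminate at $v_i$. The only cosmetic remark is that your ``unless all neighbours are equal'' caveat is unnecessary, since $x_k < \tfrac{1}{m}\sum_r x_{p_r} \le \max_r x_{p_r}$ already gives strictness unconditionally.
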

\begin{proof}
 Since $j \not = i$, the corresponding equation from $\tilde{L}_G x=e_i$ is 
 \[
     x_j=\frac{1}{d_j+1}\sum_{k:\, v_k \sim v_j} x_k. 
 \]
 Recall that  $v_k \sim v_j$ means that $v_k$ and $v_j$ are adjacent. Therefore  
 \[
     x_j=\frac{1}{d_j+1}\sum_{k:\, v_k \sim v_j} x_k <\frac{1}{d_j}\sum_{k:\, v_k \sim v_j} x_k,
 \]
  where the last expression is the average $x$-value among the neighbors of vertex $v_j$. (Note, concerning the inequality, we already know that every entry in the matrix $B$ is strictly positive.) Therefore there must be a  neighbor $v_{j_1}$ of $v_j$, i.e., $v_j \sim v_{j_1}$, with $x_j<x_{j_1}$.
 We repeat this calculation and argument with $j$ replaced by $j_1$. So there is a neighbor $v_{j_2}$ of $v_{j_1}$ such that $x_{j_1}<x_{j_2}$. Continuing like this we construct a path $P=v_j, v_{j_1}, v_{j_2}, \ldots, v_{j_k}, v_i$ with
  \[
   x_j<x_{j_1} < x_{j_2} < \cdots < x_{j_k}<x_i.
  \]
   All the vertices in $P$ must be distinct as the $x$-value is strictly increased in each step. The process can only terminate in vertex $v_i$ because in any other vertex we can find a neighbor with higher $x$-value. This proves the theorem. 
\end{proof}

Concerning Theorem \ref{thm:G_B_monotone} the inequality $b_{ii}>b_{ij}$ was also shown in \cite[Theorem 2]{Merris_II}  with a different argument (and the equality case was discussed). The  ``path inequalities''  (\ref{eq:G-mon}) give a stronger result of a more global character.

\section{Algorithm for trees and applications}
\label{sec:alg_tree}

The construction in the proofs of the previous subsection has a further potential which is to give a rather simple procedure for computing the inverse matrix $B_T=(\tilde{L}_T)^{-1}$ when $T$ is a tree. We discuss the details next. 

Let $T=(V,E)$ be a tree and label again its vertices by $v_1, v_2, \ldots, v_n$. Let $T^{(i)}$ be the directed tree obtained from $T$ by directing each edge away from the vertex $v_i$ ($i \le n$). We call $v_i$ the {\em root}  of $T^{(i)}$, and it is the unique vertex with no ingoing edge.
Let $\delta^+(v_k)$ denote the set of outgoing edges from vertex $v_k$, i.e., all tree edges of the form $(v_k,v_s)$ for some $s$. As usual $d_k$ is the degree of vertex $v_k$ in $T$.

Let $i \le n$ be fixed. The  following algorithm works on the tree $T^{(i)}$ with its root $v_i$. It computes edge labels, called {\em multipliers}, from the leaves towards the root $v_i$, and uses these multipliers to compute the $i$'th column $x$ in $B_T=(\tilde{L}_T)^{-1}$. 
For an edge $(v_p,v_k)$ the multiplier $m_{pk}$ will become 
 $m_{pk}=x_p/x_k$, so therefore  $x_p=m_{pk}x_k$.
The initial step is to consider the Laplacian equation for each pendent vertex, which relates the two values $x_p$ and $x_k$ where $v_k$ is the pendent vertex: $x_p=2x_k$. This gives the multiplier 2 for the edge $(v_p,v_k)$. 

Then, recursively,  we consider another edge $(v_p,v_k)$ where all multipliers have been determined for edges in $\delta^+(v_k)$, and consider the Laplacian equation for vertex $v_k$. This  gives a new multiplier for this edge $(v_p,v_k)$ which relates the values $x_p$ and $x_k$. Overall this gives a procedure where multipliers are computed from the leaves towards the root vertex $v_i$. Then the value $x_i$ may be determined (using that the row sum is 1), and, finally, all other values can be computed from the multipliers.  

\bigskip\bigskip\bigskip
{\small
\begin{tabbing}
{\bf Algor}\={\bf ithm $B_T$.} \\
Input: A tree $T$ and a vertex $v_i$ in $T$. 
\\
\\

 \>0. Construct the directed tree $T^{(i)}$.\\
 \>1. \=For each directed edge $(v_p,v_k)$ where $k\not =i$ and $v_k$ is a pendant vertex define the \\
   \>\>label $m_{pk}=2$. \\
 \>2. \>(a) Choose a directed edge $(v_p,v_k)$ where each edge in $\delta^+(v_k)$ has been \\
 \>\>labeled. Then give \=the edge $(v_p,v_k)$ the label \\
 \>\> \>         $m_{pk}=d_k+1-\sum_{j:(v_k,v_j)\in \delta^+(v_k)} 1/m_{kj}$. \\
 \>\>(b) Repeat step (a) until all edges have been labeled.  \\ 
\>3. (a) Let \\
\>\>\> $x_i=(d_i+1-\sum_{j:(v_i,v_j)\in \delta^+(v_i)} 1/m_{ij})^{-1}$. \\
\>\>(b) For  each directed edge $(v_p,v_k)$ where $x_p$ has been computed, let \\
\>\>\> $x_k=x_p/m_{pk}$.   \\

\\
Output: $x$, which is the $i$'th column  in $B_T=(\tilde{L}_T)^{-1}$. 
\end{tabbing}
}

\medskip
\begin{theorem}
 \label{thm:tree_alg_BT}
  Let $T$ be a tree with $n$ vertices. Algorithm $B_T$ correctly computes the $i$'th column $x$ in $B=(\tilde{L}_T)^{-1}$.  The number of steps is $O(n)$. This gives an $O(n^2)$ algorithm to compute $B_T$.
  
\end{theorem}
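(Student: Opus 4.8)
The plan is to verify correctness of Algorithm $B_T$ by induction on the structure of the rooted tree $T^{(i)}$, establishing the invariant that each computed multiplier $m_{pk}$ equals the ratio $x_p/x_k$ of the true solution components, and then to count the arithmetic operations. First I would set up notation: let $x$ be the unique solution of $\tilde{L}_T x = e_i$, which (since $B_T$ is symmetric) is column $i$ of $B_T$. The key observation, already used in the proof of Theorem~\ref{thm:tree_B_monotone}, is that for any non-root vertex $v_k$ with parent $v_p$ and children $v_{j_1},\dots,v_{j_s}$, the Laplacian equation at $v_k$ reads $(d_k+1)x_k - x_p - \sum_{r=1}^s x_{j_r} = 0$, i.e. $x_p = (d_k+1)x_k - \sum_r x_{j_r}$.

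The induction is on the height of $v_k$ in $T^{(i)}$, proving the claim: after Steps 1--2, every edge $(v_p,v_k)$ has label $m_{pk}$ satisfying $x_p = m_{pk} x_k$ and $m_{pk} > 0$ (positivity matters because we divide by $m_{kj}$). Base case: $v_k$ a leaf; then the equation at $v_k$ is $2x_k - x_p = 0$, so $x_p = 2x_k$ and $m_{pk}=2$ is correct. Inductive step: if $v_k$ has children $v_{j_1},\dots,v_{j_s}$, by the inductive hypothesis each $m_{kj_r}$ was computed with $x_k = m_{kj_r} x_{j_r}$, i.e. $x_{j_r} = x_k/m_{kj_r}$; substituting into $x_p = (d_k+1)x_k - \sum_r x_{j_r} = \bigl(d_k+1 - \sum_r 1/m_{kj_r}\bigr)x_k$ gives exactly the label assigned in Step 2(a), so $m_{pk} = x_p/x_k$. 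Positivity of $m_{pk}$ follows since $x_p, x_k > 0$ (all entries of $B_T$ are positive); this is the cleanest justification, and I would note in passing that one can also see $m_{pk} \ge 2 > 0$ directly from the bound $x_p \ge 2x_k$ in Theorem~\ref{thm:tree_B_monotone}. Next, Step 3(a): the Laplacian equation at the root $v_i$ is $(d_i+1)x_i - \sum_{r} x_{j_r} = 1$ where the sum is over children of $v_i$; substituting $x_{j_r} = x_i/m_{ij_r}$ yields $\bigl(d_i+1 - \sum_r 1/m_{ij_r}\bigr)x_i = 1$, which is precisely the formula for $x_i$ (and the coefficient is nonzero since $\tilde{L}_T$ is invertible and $x_i>0$). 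Finally Step 3(b) propagates $x_k = x_p/m_{pk}$ away from the root, which is correct by the established relation $x_p = m_{pk}x_k$; since $T^{(i)}$ is connected and every non-root vertex is reached along its unique root-path, all components of $x$ are computed.

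For the complexity claim: Step 0 builds $T^{(i)}$ in $O(n)$ time (a BFS/DFS from $v_i$). In Step 2, each edge is labeled exactly once, and computing the label for edge $(v_p,v_k)$ costs $O(|\delta^+(v_k)|)$ additions/divisions; summing over all edges, $\sum_k |\delta^+(v_k)| = n-1$, so the total is $O(n)$ — here the subtlety is that although the formula for $m_{pk}$ has a sum over children of $v_k$, the natural bookkeeping (accumulate $\sum_j 1/m_{kj}$ as the children's edges get labeled) keeps the aggregate cost linear; I would state this amortization explicitly. Step 3 likewise touches each vertex/edge $O(1)$ times, so $O(n)$. Hence one column costs $O(n)$, and running the algorithm for $i=1,\dots,n$ gives all of $B_T$ in $O(n^2)$ time; by symmetry one could even halve the work, but $O(n^2)$ is the stated bound.

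The main obstacle is not conceptual but a matter of careful case-handling: one must be sure the multipliers are well-defined throughout, i.e. that no division by zero occurs in $1/m_{kj}$ or in the inverse in Step 3(a). This is exactly where positivity of the entries of $B_T$ (equivalently, of the components of $x$) is essential, and it is worth flagging that this fact — $\tilde{L}_T$ being an M-matrix with positive inverse, as recalled in the introduction — underpins the whole argument; without it the recursion could in principle break down. A secondary point to handle cleanly is the ordering of Step 2: one needs that a valid choice of edge $(v_p,v_k)$ with all of $\delta^+(v_k)$ labeled always exists until all edges are done, which holds because the set of already-labeled edges always forms (the edge sets of) a collection of subtrees hanging from the leaves, and the "frontier" edges are precisely the eligible ones — a standard leaves-to-root sweep. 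Both points are routine but should be stated rather than glossed.
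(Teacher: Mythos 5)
Your proof is correct and follows essentially the same route as the paper's: an inductive verification, from the leaves toward the root, that each multiplier $m_{pk}$ equals the true ratio $x_p/x_k$ via the Laplacian equation at $v_k$, followed by the root equation for $x_i$ and propagation outward. You are in fact somewhat more careful than the paper, which does not explicitly justify the positivity of the multipliers (needed so that no division by zero occurs), the existence of a valid edge ordering in Step 2, or the $O(n)$ operation count via amortization over $\sum_k |\delta^+(v_k)| = n-1$.
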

\begin{proof}
Let $x$ denote the $i$'th column of $B_T=(\tilde{L}_T)^{-1}$. Consider the directed tree $T^{(i)}$ and for each directed edge $(v_p,v_k)$ define 
 \[
    m^*_{pk}=x_p/x_k.
 \]
 
 We claim that $m_{pk}=m^*_{pk}$ for each $(v_p,v_k)$, where $m_{pk}$ is determined by Algorithm $B_T$. First, consider the case when $(v_p,v_k)$ is such that $v_k$ is a pendant vertex and $p\not =i$. Then, the Laplace equation associated with vertex $v_k$ is 
 $2x_k=x_p$, so 
 \[
    m^*_{pk}=x_p/x_k=2=m_{pk},
 \]
 as desired. Next, consider $(v_p,v_k)$ where $v_k$ is not a pendant vertex and $p \not =i$. Assume that $m^*_{kj}=m_{kj}$ for each $(v_k,v_j) \in \delta^+(v_k)$. Then the Laplace equation for vertex $v_k$ is 
 \[
 \begin{array}{ll}\vspace{0.2cm}
    (d_k+1)x_k&=x_p+\sum_{(v_k,v_j) \in \delta^+(v_k)} x_j\\ \vspace{0.2cm}
     &=x_p+\sum_{(v_k,v_j) \in \delta^+(v_k)} x_k/m^*_{kj} \\ \vspace{0.2cm}
     &=x_p+\sum_{(v_k,v_j) \in \delta^+(v_k)} x_k/m_{kj}.
 \end{array}
 \]
 Therefore
 \[
  \big(d_k+1-\sum_{(v_k,v_j) \in \delta^+(v_k)} 1/m_{kj}\big)x_k=x_p 
 \]
 which means, by Step 3(a) in Algorithm $B_T$ that
 $m_{pk}x_k=x_p$. So 
 \[
    m^*_{pk}=x_p/x_k=m_{pk}.
 \]
 Finally, consider vertex $v_i$. The corresponding Laplace equation gives the expression for $x_i$ in Step 3(a). Then, the remaining values $x_k$ ($k \not = i)$ are computed as in Step 3(b), as $m^*_{pk}=x_p/x_k=m_{pk}$.

\end{proof}
 
Thus Algorithm $B_T$ traverses the edge set of $T$ twice. The first time from leaves towards the root, in order to compute the multipliers $m_{kp}$. The second time one traverses from the root in, e.g.,  a breadth-first-search way and compute the desired vector $x$.

For small, or some simple classes of trees, Algorithm $B_T$ may be preformed analytically and provide an explicit expression for the inverse matrix $B_T$.
We give some such examples next.

\begin{example} 
{\rm 
We will apply Algorithm $B_T$ to the tree $T$ depicted in Figure \ref{tree} computing its $6$'th column. So, let $i=6$.
Step 2 gives $m_{52}=m_{53}=m_{41}=2$ and then $m_{65}=3$, $m_{64}=5/2$. In Step 3 we first compute $x_6=(3-(1/3+2/5))^{-1}=15/34$, and using the multipliers we find $x_{5}= 5/34$, $x_{4} =3/17$, $x_{2}= x_{3}=5/68$ and $x_{1} =3/34$. Thus the $6$'th column, and row,  of the matrix $B_{T}$ is $x=(3/34, 5/68, 5/68, 3/17, 5/34, 15/34)=\frac{1}{68}(6,5,5,12,10,30)$.
}\hfill{$\diamond$}
\end{example} 

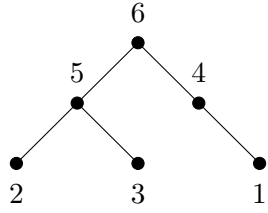
\begin{figure}
\centering
 \begin{tikzpicture}[scale=0.4]
  \draw[fill]  (4,0) circle (0.2cm);
  \draw[fill]  (2,-2) circle (0.2cm);
 \draw[fill]  (6,-2) circle (0.2cm);
 \draw[fill]  (0,-4) circle (0.2cm);
 \draw[fill]  (4,-4) circle (0.2cm);
 \draw[fill]  (8,-4) circle (0.2cm);
 \draw  (4,1) node{\small{$6$}};
  \draw  (2,-1) node{\small{$5$}};
  \draw (6,-1) node{\small{$4$}};
   \draw (0,-5) node{\small{$2$}};
   \draw (4,-5) node{\small{$3$}};
   \draw[fill] (8,-5) node{\small{$1$}};
 \draw (4,0) --(2,-2);
 \draw (4,0) --(6,-2);
 \draw (2,-2) --(0,-4);
 \draw (2,-2) --(4,-4);
 \draw (6,-2) --(8,-4);
 \end{tikzpicture}
\caption{A tree $T$ with $6$ vertices}
\label{tree}
\end{figure}

\begin{example}
{\rm    Let $S_n$ be the star with $n\geq 2$ vertices where the central vertex is labelled as $v_n$. Then  
    \begin{equation}\label{starinverse}
  B_{S_n}= \frac{1}{2n+2}\left[
  \begin{array}{ccccc}
    n+2& 1&\cdots& 1& 2 \\
   1& n+2&\cdots& 1& 2 \\
    && \ddots \\
    1& 1 &\cdots& n+2& 2 \\
   2& 2&\cdots& 2& 4 \\
  \end{array}
  \right].
  \end{equation}

To verify this we use Algorithm $B_T$ to compute the $i$'th column of $B_{S_n}$. 

First compute the last column of $B_{S_n}$. Step 1 gives $m_{nj}=2$ for $j\le n-1$, so Step 2 is not needed. Then Step 3 (a) gives  $x_n= (n-(n-1)(1/2))^{-1}= 2/ (n+1)$. Step 3 (b) gives $x_j=2/(2n+2)$ for $j<n$. 

Next,  compute the first column of $B_{S_n}$. Step 1 gives $m_{nj}=2$ for $2\le j \le n-1$, and Step 2 gives $m_{1n}=(n+2)/2$. Then Step 3 gives $x_1=(n+2)/(2n+2)$,  $x_n=2/(2n+2)$, and $x_j=1/(2n+2)$ for $2 \le j \le n-1$.
The other columns follow from column $1$, by symmetry.
}\hfill{$\diamond$}
\end{example}

Next, we consider paths. In the next example we will see that the entries of $B_{P_n}$ (in this case just for the last column) can be written as function of quotients of Fibonacci numbers.

\begin{example}
{\rm 
 Consider the path $P_n=v_1, v_2, \ldots, v_n$. We compute the $n$'th column of the matrix $B_{P_n}.$  
Step 1 gives $m_{21}= 2=\frac{f_3}{f_1}.$
From Step 2, and 
for each $2\leq j\leq n-1$, we have 
$$m_{j+1, j} = 3- \frac{f_{2j-3}}{f_{2j -1}}= \frac{3f_{2j-1}- f_{2j-3}}{f_{2j-1}} = \frac{f_{2j+1}}{f_{2j-1}}.$$

From Step 3 (a) we have 
$x_n = ( 2 - \frac{f_{2n-3}}{f_{2n-1}})^{-1}=( \frac{2 f_{2n-1} -f_{2n-3}}{f_{2n-1}})^{-1} =\frac{f_{2n-1}}{f_{2n}}.$
From Step 3 (b), using the multipliers, we obtain $$x_{n-1}=(m_{n,n-1})^{-1} x_n =\frac{f_{2n-3}}{f_{2n-1}}x_{n}=\frac{f_{2n-3}}{f_{2n}},$$ and so on. The procedure is repeated until $x_1$ is obtained. 
Then, the $n$'th column of $B_{P_n}$ is 
\[
x= (1/f_{2n}) \cdot   (f_1, f_3,\ldots , f_{2n-3}, f_{2n-1})    
\] 
as confirmed in Section 3.
}\hfill{$\diamond$}
\end{example}

\begin{example}
{\rm
Let $n_1, n_2,  \ldots, n_{k-1}$ be positive integers. We compute the first column of $B_{S},$ where $S=S(n_1,n_2, \ldots, n_{k-1})$ is a starlike tree.  The starlike tree $S=S(n_1,n_2, \ldots, n_{k-1})$ is a tree that results from the stars $S_{n_{1}+1}$, \ldots, $S_{n_{k-1}+1}$ by connecting their centers to an extra vertex with label $1$. The neighbours of $1$ are labelled as $2,3, \ldots , k$. 
For $j= 2, \ldots, k$ the neighbours of $j$ are the vertex $1$ and the pendant vertices labelled as $ k+n_{1}+\cdots+ n_{j-2} +1, \ldots,k+ n_{1}+n_{2} + \cdots +n_{j-1},$ where the sum $n_{1}+\cdots+ n_{j-2}$ is void for $j=2.$
See Fig.~\ref{fig:starlike} for $S(n_1, n_2, n_3)=S(3,3,4).$\\
Using Algorithm $B_T$ we will now compute the first column of $B_S$.
Thus for $j=2, \ldots, k$, Step 1 gives
$$m_{j, k+n_{1}+\cdots+ n_{j-2}+\ell_{j-1}}=2, \;\;(1\leq \ell_{j-1} \leq n_{j-1}), $$ where  $n_{1}+\cdots+ n_{j-2}$ is void for $j=2.$
\\

Step 2 gives: $ m_{1, \ell},$ with $2 \leq \ell \leq k$, 
$m_{1, \ell} = n_{\ell-1} +2 - (n_{\ell-1}) \frac{1}{2}= \frac{n_{\ell-1} +4}{2}.\bigskip$

From Step 3 (a) we have: 
$x_1 = (k- (\frac{2}{n_{1}+4} + \frac{2}{n_{2}+4}+\cdots+ \frac{2}{n_{k-1}+4}))^{-1}= \gamma.$
Then, from (b) we have $x_j =\frac{2}{n_{j-1}+4} x_{1}= \frac{2}{n_{j-1}+4} \gamma,$ with $2 \leq j\leq k,$
and
$$
\begin{array}{lllllllll}
x_{k+1}&= &x_{k+2}&= & \ldots&= &x_{k+n_1}&=& \frac{1}{n_1+4} \gamma;\\
x_{k+n_1 +1}& = &x_{k+n_1 +2}& =& \ldots &=& x_{k+n_1 +n_2}& = &\frac{1}{n_2+4} \gamma; \\
&\vdots& & & & & & &\\
x_{k+n_1 +\cdots+ n_{k-2}+1} &= &
x_{k+n_1 +\cdots+ n_{k-2}+2}& =& \ldots &=& x_{k+n_1 +\cdots+n_{k-2}+ n_{k-1}} &=& \frac{1}{n_{k-1}+4} \gamma.
\end{array}
$$
Thus, the 1st column of $B_{S}$ is:
$$
(\gamma, \frac{2}{n_1 +4}\gamma, \ldots, \frac{2}{n_{k-1} +4}\gamma,\underbrace{\frac{1}{n_1+4}\gamma, \ldots, \frac{1}{n_1+4} \gamma}_{n_1}, \ldots, \underbrace{\frac{1}{n_{k-1}+4} \gamma, \ldots, \frac{1}{n_{k-1}+4} \gamma}_{n_{k-1}} ).
$$
}
\end{example} \hfill{$\diamond$}

\begin{figure}
\centering
 \begin{tikzpicture}[scale=0.6]
  \draw[fill]  (0,0) circle (0.2cm);
  \draw[fill]  (-3,2) circle (0.2cm);
 \draw[fill]  (3,2) circle (0.2cm);
\draw[fill]  (0,-3) circle (0.2cm);
\draw[fill]  (0,-4) circle (0.2cm);
\draw[fill]  (1,-4) circle (0.2cm);
\draw[fill]  (-1,-4) circle (0.2cm);  
\draw[fill]  (4,3) circle (0.2cm);
\draw[fill]  (4,1) circle (0.2cm);
\draw[fill]  (4,2) circle (0.2cm);
\draw[fill] (3,3.5) circle (0.2cm);

\draw[fill]  (-4,3) circle (0.2cm);
\draw[fill]  (-3,3.5) circle (0.2cm);
\draw[fill]  (-4,1.5) circle (0.2cm);
 
 \draw  (0,0.6) node{$1$};
  \draw (0,0) --(-3,2);
  \draw (0,0) --(3,2);
  \draw (0,0) --(0,-3);
  
  \draw (0,-3) --(0,-4);
  \draw (0,-3) --(1,-4);
  \draw (0,-3) --(-1,-4);
    
  \draw (3,2) --(4,3);
  \draw(-3,2)--(-4,3);
  \draw(-3,2)--(-3,3.5);
  \draw(-3,2)--(-4,1.5);       
  \draw(3,2)--(4,1);
  \draw(3,2)--(3,3.5);  
    \draw(3,2)--(4,2); 
\end{tikzpicture} 
\caption{S(3,3,4)}
\label{fig:starlike}
\end{figure}

\medskip
We return to general trees and we will now give a rough estimate of the multipliers in Algorithm $B_T$. If $v_k$ is a pendent vertex, then its incident edge $(v_p,v_k)$ is called a {\em pendent edge}.  

\begin{theorem}
  \label{thm:mult-prop}
  Consider a tree $T$. Let $i \le n$ and consider the multipliers $m_{pk}$ in Algorithm $B_T$. For each edge $(v_p,v_k)$ the following inequalities hold 
  \begin{equation}
    \label{eq:B-ineq}
     2 \le (1/2)d_k+3/2 \le m_{pk} \le d_k+1.
  \end{equation}
  If $(v_p,v_k)$ is a pendent edge, equality holds throughout, so $m_{pk}=2$.
\end{theorem}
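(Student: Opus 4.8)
The plan is to prove the bounds in~(\ref{eq:B-ineq}) by induction on the distance from the pendant vertices toward the root $v_i$, following the structure of the recursion defining the multipliers. The key formula is, for an edge $(v_p,v_k)$ with $k\neq i$,
\[
   m_{pk}=d_k+1-\sum_{j:(v_k,v_j)\in\delta^+(v_k)} \frac{1}{m_{kj}},
\]
and the induction hypothesis will be that every outgoing edge $(v_k,v_j)$ already satisfies $2\le m_{kj}\le d_k'+1$, where $d_k'=d_j$; in particular $m_{kj}\ge 2$, hence $1/m_{kj}\le 1/2$.

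First I would treat the base case: if $v_k$ is a pendant vertex, then $v_k$ has degree $d_k=1$, it has no outgoing edges, so the sum is empty and $m_{pk}=d_k+1=2$; since also $(1/2)d_k+3/2 = 1/2+3/2 = 2$ and $d_k+1=2$, equality holds throughout, which also settles the last sentence of the theorem. For the inductive step, let $(v_p,v_k)$ be a non-pendant edge, so $v_k$ has $s=d_k-1\ge 1$ outgoing edges (one ingoing edge $(v_p,v_k)$ and $d_k-1$ outgoing). The upper bound is immediate: each $1/m_{kj}$ is positive, so $m_{pk}\le d_k+1$. For the lower bound, by the induction hypothesis each $m_{kj}\ge 2$, so $1/m_{kj}\le 1/2$, and therefore
\[
   m_{pk}\ge d_k+1-(d_k-1)\cdot\tfrac12 = \tfrac12 d_k + \tfrac32.
\]
Finally $\tfrac12 d_k+\tfrac32\ge 2$ because $d_k\ge 1$, which gives the leftmost inequality $2\le m_{pk}$ and in turn re-establishes $m_{pk}\ge 2$ so that the induction propagates.

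The only subtlety to handle carefully is the bookkeeping of degrees: the quantity $d_k$ appearing in the bound for the edge $(v_p,v_k)$ is the degree of the \emph{head} $v_k$ in the undirected tree $T$, and the number of outgoing edges of $v_k$ in $T^{(i)}$ is exactly $d_k-1$ whenever $k\neq i$ (one of its incident edges points in). One should note that the formula for $m_{pk}$ is only used for $k\neq i$, so this count is always valid, and the root $v_i$ itself is never the head of an edge, so no multiplier is ever computed for it. I expect no real obstacle here; the argument is a clean induction, and the main point worth stating explicitly is simply that $m_{kj}\ge 2$ for the already-processed outgoing edges, which is what makes $\sum 1/m_{kj}\le (d_k-1)/2$ and hence the lower bound go through.
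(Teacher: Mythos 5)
Your proof is correct and follows essentially the same route as the paper: induction from the pendant edges toward the root, using $m_{kj}\ge 2$ to bound $\sum_j 1/m_{kj}\le (d_k-1)/2$ in the recursion $m_{pk}=d_k+1-\sum_j 1/m_{kj}$. The only cosmetic difference is that you obtain the base case $m_{pk}=2$ from the empty-sum form of the recursion rather than by citing Lemma~\ref{lem:pendant}, which is equivalent.
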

\begin{proof}
 If $(v_p,v_k)$ is a pendent edge, then $m_{pk}=2$ by Lemma \ref{lem:pendant}.  Next, consider an edge $(v_p,v_k)$ where $v_k$ is non-pendent, so $d_k\ge 2$.  
 Assume that 
 \[
     2 \le  m_{kj} \le d_j+1 \;\;\; \mbox{\rm for all $j$ with $(v_k,v_j) \in \delta^+(v_k)$.}
 \]
 Then, from Algorithm $B_T$, we have 
 \[
 \begin{array}{ll} \vspace{0.2cm}
    d_k+1\ge m_{pk}&=d_k+1-\sum_{j:(v_k,v_j)\in \delta^+(v_k)} 1/m_{kj} \\ \vspace{0.2cm}
       &\ge d_k+1-\sum_{j:(v_k,v_j)\in \delta^+(v_k)} 1/2 \\ \vspace{0.2cm}
       &= d_k+1-(1/2)(d_k-1) \\ \vspace{0.2cm}
       &=(1/2)d_k+3/2 \\ \vspace{0.2cm}
       &\ge 2.
 \end{array}   
 \]
 So, the result now follows by induction by considering edges in a sequence from pendent vertices up to the edge $(v_p,v_k)$.
\end{proof}

The previous theorem gives bounds on the change among the entries in $B=[b_{ij}]$ when one moves from a pendent vertex towards vertex $i$ (which gives number of row/column $i$ in $B$). We see from the proof that the lower bound on $m_{pk}$ is tight if and only if $v_j$ is a pendant vertex for each $(v_k,v_j) \in \delta^+(v_k)$.

\begin{corollary}
  \label{cor:mult-prop3}
  Let $T$ be a tree and let $i \le n$. Consider an edge $(v_p,v_k)$ in $T^{(i)}$ and define 
  \[
     m^*=\min\{m_{kj}: (v_k,v_j) \in \delta^+(v_k)\}.
  \]
  Then 
  \begin{equation}
  \label{eq:B-ineq2}
      m_{pk} \ge (1-1/m^*)d_k+1+1/m^*.
  \end{equation}

\end{corollary}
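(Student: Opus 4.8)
The plan is to obtain this as an immediate consequence of the recursive formula for the multipliers in Step~2(a) of Algorithm $B_T$. Recall that for an edge $(v_p,v_k)$ in $T^{(i)}$ with $v_k$ non-pendent (so that $\delta^+(v_k)\neq\emptyset$ and the quantity $m^*$ is well defined), the algorithm assigns
\[
 m_{pk}=d_k+1-\sum_{j:(v_k,v_j)\in\delta^+(v_k)}\frac{1}{m_{kj}}.
\]
So the whole task reduces to bounding this sum from above.

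First I would note a bookkeeping fact about the rooted tree $T^{(i)}$: since every edge is oriented away from $v_i$, the vertex $v_k$ is not the root (it is the head of $(v_p,v_k)$), hence it has exactly one ingoing edge, namely $(v_p,v_k)$. Therefore $|\delta^+(v_k)|=d_k-1$. Next, by the very definition of $m^*$ we have $m_{kj}\ge m^*$ for every $(v_k,v_j)\in\delta^+(v_k)$, so $1/m_{kj}\le 1/m^*$; summing over the $d_k-1$ outgoing edges gives $\sum_{j}1/m_{kj}\le (d_k-1)/m^*$. Substituting this into the displayed formula yields
\[
 m_{pk}\ge d_k+1-\frac{d_k-1}{m^*}=\Big(1-\frac{1}{m^*}\Big)d_k+1+\frac{1}{m^*},
\]
which is exactly \eqref{eq:B-ineq2}.

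There is essentially no hard step here; the argument is a one-line refinement of the crude estimate in Theorem~\ref{thm:mult-prop}, replacing the bound $m_{kj}\ge 2$ by the sharper $m_{kj}\ge m^*$. The only points requiring a little care are the count $|\delta^+(v_k)|=d_k-1$ in the rooted orientation, and the implicit hypothesis that $v_k$ is non-pendent so that $m^*$ (a minimum over $\delta^+(v_k)$) is meaningful; in the degenerate case where $(v_p,v_k)$ is a pendent edge, the statement collapses to the already-established identity $m_{pk}=2$ from Lemma~\ref{lem:pendant}.
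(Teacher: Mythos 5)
Your proof is correct and follows exactly the paper's argument: apply the Step~2(a) formula for $m_{pk}$, bound each $1/m_{kj}$ by $1/m^*$, and use $|\delta^+(v_k)|=d_k-1$ to arrive at \eqref{eq:B-ineq2}. Your explicit remarks on the edge count and the degenerate pendent case are sensible clarifications of points the paper leaves implicit, but the substance is identical.
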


\begin{proof}
 As in the previous proof, from Algorithm $B_T$, we get 
 \[
 \begin{array}{ll} \vspace{0.2cm}
    m_{pk}&=d_k+1-\sum_{j:(v_k,v_j)\in \delta^+(v_k)} 1/m_{kj} \\ \vspace{0.2cm}
       &\ge d_k+1-\sum_{j:(v_k,v_j)\in \delta^+(v_k)} 1/m^* \\ \vspace{0.2cm}
       &= d_k+1-(1/m^*)(d_k-1) \\ \vspace{0.2cm}
       &=(1-1/m^*)d_k+1+1/m^*.
 \end{array}   
 \]
 \end{proof}

From Theorem \ref{thm:mult-prop} and Corollary \ref{cor:mult-prop3} we see that if the multipliers are ``large'' for all multipliers of outgoing edges of a vertex $v_k$, then the next multiplier $m_{pk}$ will be close to the degree $d_k$.

\medskip
As an application of the results above, we consider a special class of trees. Let $\mc{T}^{(3)}_n$ be the class of trees where each non-pendent vertex has degree $3$ (also called regular tree of degree $3$). These are a special case of {\em chemical trees}, i.e., trees that have no vertex with degree greater than $4$. Chemical trees are used to represent chemical structures and they offer a powerful way to understand and analyze chemical compounds.
Let $T \in\mc{T}^{(3)}_n$ and consider its vertices $v_1, v_2, \ldots, v_n$. Let $i,j \le n$ with $i \not = j$. We let $d(i,j)$ denote the distance between $v_i$ and $v_j$, defined as the number of edges in the (unique) $v_iv_j$-path. Also, let $V_p$ denote the set of pendent vertices in $T$. Define for $i \not = j$ 
\begin{equation}
 \label{eq:deg3-bd}  
\begin{array}{l}\vspace{0.4cm}
\tilde{\eta}(i,j)=
\left\{
\begin{array}{cr}
   (1/4)^{d(i,j)} &\mbox{if $v_j \not \in V_p$,} \\ 
   (1/2)\cdot (1/4)^{d(i,j)-1} &\mbox{if $v_j  \in V_p$;}
\end{array}
\right.
\\
\hat{\eta}(i,j)=
\left\{
\begin{array}{cr}
   (1/3)^{d(i,j)} &\mbox{if $v_j \not \in V_p$,} \\ 
   (1/2)\cdot (1/3)^{d(i,j)-1} &\mbox{if $v_j  \in V_p$.}
\end{array}
\right.
\end{array}
\end{equation}

\begin{corollary}
  \label{cor:3-trees}
  Let $T \in\mc{T}^{(3)}_n$ and consider $B_T=[b_{ij}]$. Let $i \le n$. Then 
  \begin{equation}
    \label{eq:T3-bounds} 
    \big(1+\sum_{j\not = i} \hat{\eta}(i,j) \big)^{-1} \le 
    b_{ii} \le \big(1+\sum_{j\not = i} \tilde{\eta}(i,j) \big)^{-1},
  \end{equation}
  and for $j \not = i$
    \begin{equation}
    \label{eq:T3-bounds2} 
    \tilde{\eta}(i,j)\big(1+\sum_{k\not = i} \bar{\eta}(i,k) \big)^{-1} 
    \le 
    b_{ij} \le \hat{\eta}(i,j)\big(1+\sum_{k\not = i} \tilde{\eta}(i,k) \big)^{-1}.
  \end{equation}
\end{corollary}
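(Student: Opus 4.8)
The plan is to combine the exponential-decay monotonicity already established with the doubly stochastic normalization, exactly in the spirit of Corollary \ref{cor:tree_B_monotone}, but now using the sharper multiplier bounds of Theorem \ref{thm:mult-prop} available for degree-$3$ trees. First I would fix the root vertex $v_i$ and work in the directed tree $T^{(i)}$. For any $v_j \neq v_i$, consider the unique $v_iv_j$-path $v_{i_0}=v_i, v_{i_1}, \ldots, v_{i_k}=v_j$ with $k=d(i,j)$. By Algorithm $B_T$, $b_{i,i_{t}} = m_{i_{t-1},i_t}^{-1}\, b_{i,i_{t-1}}$ reversed appropriately; more precisely, walking from $v_i$ outwards, $b_{ij} = b_{ii} \prod_{t=1}^{k} m_{i_{t-1}, i_t}^{-1}$, where each $m_{pk}$ is the multiplier on the edge $(v_p,v_k)$ directed away from the root. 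Since $T \in \mc{T}^{(3)}_n$, every non-pendent vertex has degree $3$, so Theorem \ref{thm:mult-prop} gives $2 \le m_{pk} \le d_k+1$, and for a non-pendent head $v_k$ (degree $3$) this reads $2 \le m_{pk} \le 4$, while for a pendent head $v_k$ we have exactly $m_{pk}=2$.

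Next I would convert these per-edge bounds into the functions $\tilde\eta$ and $\hat\eta$. Along the path from $v_i$ to $v_j$, the intermediate vertices $v_{i_1},\ldots,v_{i_{k-1}}$ are all non-pendent (they have a neighbor on each side), hence their incoming multipliers lie in $[2,4]$; the final vertex $v_{i_k}=v_j$ has incoming multiplier in $[2,4]$ if $v_j \notin V_p$ and equal to $2$ if $v_j \in V_p$. Taking the product of the upper bounds $m \le 4$ on the $k$ (resp. $k-1$) generic edges and the exact value $2$ on the pendent edge yields $\prod_t m_{i_{t-1},i_t}^{-1} \ge (1/4)^{d(i,j)} = \tilde\eta(i,j)$ when $v_j \notin V_p$, and $\ge (1/2)(1/4)^{d(i,j)-1} = \tilde\eta(i,j)$ when $v_j \in V_p$; symmetrically, using the lower bounds $m \ge 2$ throughout gives $\prod_t m_{i_{t-1},i_t}^{-1} \le (1/2)^{d(i,j)} $. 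Hmm — here I need to be careful: the claimed upper bound involves $\hat\eta$ with base $1/3$, not $1/2$, so the bound $m \ge 2$ is not enough. The resolution is that for a degree-$3$ tree a non-pendent intermediate vertex $v_k$ on the path has exactly one outgoing edge off the path (since $d_k = 3$: one edge in, one edge continuing the path, one edge off), and Corollary \ref{cor:mult-prop3} with $m^* \ge 2$ gives $m_{pk} \ge (1-1/2)\cdot 3 + 1 + 1/2 = 3$. So in fact $m_{pk} \ge 3$ at every non-pendent vertex of degree $3$, which is precisely what produces the base $1/3$. I would state this refinement explicitly as the key lemma: in $\mc{T}^{(3)}_n$, every non-pendent-head edge satisfies $3 \le m_{pk} \le 4$.

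With that refinement, the two-sided product bound reads $\tilde\eta(i,j) \le b_{ij}/b_{ii} \le \hat\eta(i,j)$ for every $j \neq i$ (with the pendent-case adjustment baked into the definitions of $\tilde\eta,\hat\eta$ in \eqref{eq:deg3-bd}). Summing $b_{ij} = b_{ii} \cdot (b_{ij}/b_{ii})$ over all $j$ and using that $B_T$ is doubly stochastic, $1 = \sum_j b_{ij} = b_{ii}\big(1 + \sum_{j \neq i} b_{ij}/b_{ii}\big)$, so $b_{ii} = \big(1 + \sum_{j\neq i} b_{ij}/b_{ii}\big)^{-1}$. Plugging in $\tilde\eta(i,j) \le b_{ij}/b_{ii} \le \hat\eta(i,j)$ and noting that $t \mapsto (1+t)^{-1}$ is decreasing immediately yields \eqref{eq:T3-bounds}. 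For \eqref{eq:T3-bounds2}, write $b_{ij} = (b_{ij}/b_{ii}) \cdot b_{ii}$ and combine the bound on $b_{ij}/b_{ii}$ with the bound on $b_{ii}$ just obtained: the lower bound on $b_{ij}$ uses the lower bound $\tilde\eta(i,j)$ on the ratio times the lower bound on $b_{ii}$, which is $\big(1+\sum_k \hat\eta(i,k)\big)^{-1}$. Here the statement writes $\bar\eta$, which I will take to be $\hat\eta$ (or whichever of the two appears in the paper's definition — I would match the notation of \eqref{eq:deg3-bd}); symmetrically the upper bound on $b_{ij}$ is $\hat\eta(i,j)$ times the upper bound $\big(1+\sum_k \tilde\eta(i,k)\big)^{-1}$ on $b_{ii}$.

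The main obstacle is the refinement from the generic bound $m_{pk} \ge 2$ of Theorem \ref{thm:mult-prop} to the sharper $m_{pk} \ge 3$ needed to get base $1/3$ rather than $1/2$; this requires noticing that in $\mc{T}^{(3)}_n$ the relevant vertices have degree exactly $3$ with exactly one off-path outgoing edge, so Corollary \ref{cor:mult-prop3} applies with $d_k = 3$ and $m^* \ge 2$. Everything else is bookkeeping: carefully tracking whether the terminal vertex $v_j$ is pendent (which changes the last factor from a generic $[3,4]$ edge to an exact $2$), and applying the monotonicity of $t\mapsto (1+t)^{-1}$. I should also double-check the edge case $d(i,j)=1$ with $v_j$ pendent, where the product is a single factor $1/2$, matching $\tilde\eta(i,j)=\hat\eta(i,j)=1/2$ from \eqref{eq:deg3-bd}.
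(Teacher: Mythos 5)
Your proof is correct and follows essentially the same route as the paper: write $b_{ij}=b_{ii}\prod_t m_{i_{t-1}i_t}^{-1}$ along the unique $v_iv_j$-path, bound each multiplier by $3\le m_{pk}\le 4$ for non-pendent heads (exactly $2$ for pendent heads), and normalize via the row sum $\sum_j b_{ij}=1$. The only detour is that you rederive the lower bound $m_{pk}\ge 3$ from Corollary \ref{cor:mult-prop3}, whereas it is already contained in the middle term $(1/2)d_k+3/2$ of the chain of inequalities in Theorem \ref{thm:mult-prop} (which equals $3$ when $d_k=3$); your reading of $\bar\eta$ as $\hat\eta$ in (\ref{eq:T3-bounds2}) is the intended one.
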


\begin{proof}
  Let $j \not = i$. Let the unique $v_iv_j$-path be 
  \[
    v_i, v_{i_1}, v_{i_2}, \ldots, v_{i_{t-1}}, v_j  
  \]
where $d(i,j)=t$. By the definition of multipliers $m_{pk}$ we obtain 
  \[
    b_{ij}=b_{ii} \cdot m_{ii_1}^{-1}m_{i_1i_2}^{-1} \cdots m_{i_{t-1}j}^{-1}  
  \]
 where we suppress the dependence of $j$ in the intermediate vertices.  Now, for an edge incident to a pendent edge, the multiplier is 2. Otherwise, as degrees for non-pendent vertices are $3$, it follows from Theorem \ref{thm:mult-prop} that
 \[
     3 \le m_{pk} \le 4.
 \]
 This gives
 \begin{equation}
  \label{eq:bii_rel}   
   1=\sum_j b_{ij}=b_{ii}+b_{ii}\sum_{j \not = i} m_{ii_1}^{-1}m_{i_1i_2}^{-1} \cdots m_{i_{t-1}j}^{-1}
 \end{equation}
 and by using the lower and upper bounds for the multipliers, one obtains (\ref{eq:T3-bounds}). Then  (\ref{eq:T3-bounds2}) also follows. 
\end{proof}

To close this section, we briefly discuss a centrality concept in graphs. 
As presented in \cite{Merris_II}, and proved in \cite{Chebotarev}, for a graph $G$ and the matrix $B_G=(\tilde{L}_G)^{-1}=[b_{ij}]$, the function 
\[
  \rho(v_i,v_j)=b_{ii}+b_{jj}-2b_{ij}
\]
gives a metric on $V$. Moreover (see \cite{Merris_II}),   
$r(i):=\sum_{j=1}^n \rho(v_i,v_j)$ ($i \le n$)
is minimized if and only if $b_{ii}$ is a minimum. Such a minimizing vertex  $v_i$  was called a {\em least remote vertex}. This suggests that diagonal entries that are small correspond to ``central'' vertices in the graph. 

For a tree $T$, this observation can also be seen heuristically from (\ref{eq:bii_rel}). In fact, small value of $b_{ii}$ corresponds to a large value of the last sum in (\ref{eq:bii_rel}) which (as $m_{pk}\ge 2$) typically happens for a vertex $v_i$ with ``many vertices close'' (close in terms of combinatorial distance in $T$). This is a property of central vertices in a tree. We leave it for further work to study these questions, and, hopefully, to prove interesting results relating $b_{ii}$ to combinatorial center concepts.

   \begin{figure}[ht]
    \begin{center}
    \vspace{-3.5cm}
	\includegraphics[width=10cm]{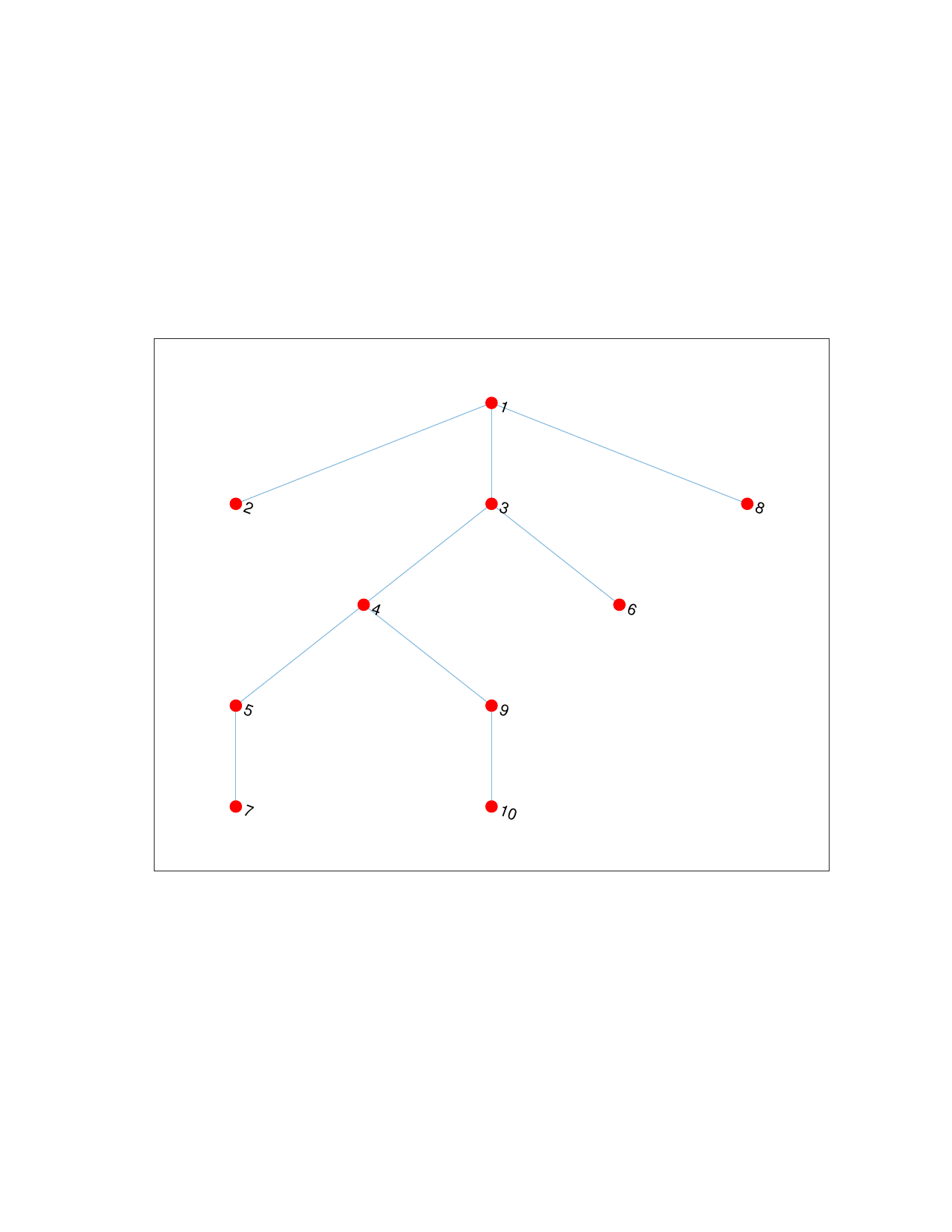}
    \vspace{-4.5cm}
    \end{center}
    \caption{A tree $T$.}
    \label{fig:tree1}
    \end{figure}
  
   \begin{example}
 \label{ex:tree_dist_bii}
    {\rm 
    Let $n=10$ and consider the tree $T$ in Fig.~\ref{fig:tree1}. Vertex $v_i$ is indicated by the label $i$. The matrix $B_T$ is shown below where the diagonal is indicated in boldface.
   
{\scriptsize 
\[
B_T =
\left[
\begin{array}{rrrrrrrrrr}
    {\bf 0.3723} & 0.1861 & 0.1168 & 0.0365 & 0.0146 &   0.0584 &   0.0073  &  0.1861&    0.0146&    0.0073\\
    0.1861 & {\bf 0.5931} & 0.0584 & 0.0182 & 0.0073 &   0.0292 &   0.0036  &  0.0931&    0.0073&    0.0036\\
    0.1168 & 0.0584 & {\bf 0.3504} & 0.1095 & 0.0438 &   0.1752 &   0.0219  &  0.0584&    0.0438&    0.0219\\
    0.0365 & 0.0182 & 0.1095 & {\bf 0.3467} & 0.1387 &   0.0547 &   0.0693  &  0.0182&    0.1387&    0.0693\\
    0.0146 & 0.0073 & 0.0438 & 0.1387 & {\bf 0.4555} &   0.0219 &   0.2277  &  0.0073&    0.0555&    0.0277\\
    0.0584 & 0.0292 & 0.1752 & 0.0547 & 0.0219 &  {\bf  0.5876} &   0.0109  &  0.0292&    0.0219&    0.0109\\
    0.0073 & 0.0036 & 0.0219 & 0.0693 & 0.2277 &   0.0109 &  {\bf  0.6139}  &  0.0036&    0.0277&    0.0139\\
    0.1861 & 0.0931 & 0.0584 & 0.0182 & 0.0073 &   0.0292 &   0.0036  & {\bf  0.5931}&    0.0073&    0.0036\\
    0.0146 & 0.0073 & 0.0438 & 0.1387 & 0.0555 &   0.0219 &   0.0277  &  0.0073&   {\bf  0.4555}&    0.2277\\
    0.0073 & 0.0036 & 0.0219 & 0.0693 & 0.0277 &   0.0109 &   0.0139  &  0.0036&    0.2277&  {\bf 0.6139}
\end{array}
\right].
\]
}
The smallest entries are $b_{44}$ followed by $b_{33}$. 
This suggests that these vertices play a role as ``central" vertices in the graph.

    }\hfill{$\diamond$}
\end{example}

\medskip
We remark that in \cite{GD07} distance vectors of  vertices in trees (vectors which contain the combinatorial distances of a vertex to all vertices in the tree) were used to define a partial order of vertices based on weak majorization of distance vectors. This was used to define a center concept, the majorization center, consisting of the ``smallest vectors'' in this majorization order. 
 In Example \ref{ex:tree_dist_bii} the majorization center consists of the vertices $v_3$ and $v_4$.

\section{Heat equation, extensions and concluding remarks}

We now briefly discuss a connection to partial differential equations (PDE) and the relevance of our results above for this. The {\em heat equation} \cite{Ragnar2005} is a PDE with applications in heat diffusion and particle diffusion, and it also plays a role in mathematical finance. The discrete heat equation has the form 
\[
\frac{du}{dt}=L_G u, \;\; u(0)=u_0,
\]
where $L_G$ is the Laplacian matrix of a graph $G$ obtained by discretizing the domain. Here  $u(x,t)$ indicates the temperature in position $x$ (in some space) at time $t$, and $u_0$ is the initial condition (a function of $x$ at time zero). For more on the heat equation and the computational approach to PDEs, see \cite{Ragnar2005}. 

The formal solution of the heat equation is 
$u(t)=P(t)u_0$ where $P(t)$ is the matrix exponential $P(t)=e^{tL_G}$. This  leads to different computational methods for solving the heat equation (e.g., Taylor expansion or eigenvector methods). Observe that 
\[
   P(t)=e^{tL_G}=e^{-tI}e^{t\tilde{L}_G}
\]
which means that the matrix exponential of the modified Laplacian matrix of $G$ may be used computationally as a heat solver, followed by a simple diagonal matrix scaling. Moreover, an extension of $\tilde{L}_G$  is of interest in connection with an implicit Euler method for solving the discrete heat equation, as we now briefly describe. Consider a step $h>0$ and consider the linear equation 
\begin{equation}
\label{eq:heat_eq_k}
    (I+hL_G)u^{k+1}=u^k   
\end{equation}
for finding a new iterate $u^{k+1}$ based on the previous iterate $u^k$. Here the coefficient matrix is 
\[
    \tilde{L}_G^{h}:=I+hL_G
\]
which is positive definite, and therefore invertible. So the solution of (\ref{eq:heat_eq_k}) is $u^{k+1}=(\tilde{L}_G^{h})^{-1}u^k$. Note that for $h=1$, we have $\tilde{L}_G^{1}=\tilde{L}_G$ and $(\tilde{L}_G^{1})^{-1}=B_G$. 

Thus, the new matrix $\tilde{L}_G^{h}$ generalizes the modified Laplacian matrix $\tilde{L}_G$, and it arises in a natural way from the heat equation.

 $\tilde{L}^{h}_G$ is an M-matrix, and has a nonnegative inverse. The inverse is positive if $G$ is connected.  Moreover, $\tilde{L}^{h}_G$ has constant row and column sums, equal to $1$, so (when $e$ is the all ones vector)  
\[
  \tilde{L}^{h}_G e=e, \;\;\mbox{\rm and}\;\; 
  e^t \tilde{L}^{h}_G=e^t.
\]
 If we multiply by $(\tilde{L}^{h}_G)^{-1}$ in the previous equations, we obtain 
\[
    (\tilde{L}^{h}_G)^{-1} e=e \;\;\mbox{\rm and}\;\; 
    e^t(\tilde{L}^{h}_G)^{-1}=e^t.
\]

This shows that $(\tilde{L}^{h}_G)^{-1}$ is a doubly stochastic matrix.

Next, we comment on how our previous results may be extended to $(\tilde{L}^{h}_G)^{-1}$ where the proofs are very similar:

\begin{itemize}
 \item Lemma \ref{lem:pendant}: replace the factor 2 in (\ref{eq:B_pendant}) by $\frac{1+h}{h}$.
 \item Theorem \ref{thm:tree_B_monotone}: replace the factor 2 in (\ref{eq:tree-d-mon}) by $\frac{1+h}{h}$.  
 \item Corollary \ref{cor:tree_B_monotone}: replace the factor 2  in (\ref{eq:diag-L-ds}) by $\frac{1+h}{h}$.
 \item Theorem \ref{thm:G_B_monotone}: the statement of the theorem is the same. For $h$ small this means that the values in the off-diagonal entries drop very fast as a function of the distance to $v_i$ (diagonal vertex).
 \item Theorem \ref{thm:tree_alg_BT}: as before. Algorithm $B_T$ is modified as follows: 
 in Step 1, $m_{pk} = (1+h)/h$; 
 in Step 2(a) the modified multiplier is 
 \[
    m_{pk}=1/h+d_k-\sum_{j:(v_k,v_j)\in \delta^+(v_k)} 1/m_{kj},
 \]
 and in 3(a) the modified expression for $x_i$ is 
 \[
    x_i=(1/h+d_i-\sum_{j:(v_i,v_j)\in \delta^+(v_i)} 1/m_{ij})^{-1}.
 \]
 \item Theorem \ref{thm:mult-prop}: replace (\ref{eq:B-ineq}) by 
 \[
      1+\frac{1}{h} \le (1-\frac{h}{h+1})d_k +\frac{h}{h+1} +\frac{1}{h}
      \le m_{pk} \le d_k+\frac{1}{h}. 
 \]
 \end{itemize}

\bigskip\bigskip
Finally, as possible future work, we believe that it is interesting to further explore the connection and usefulness of these modified Laplacians for PDEs. 
Another interesting topic is to study combinatorial and geometric properties of the set of doubly stochastic matrices obtained as inverses of modified Laplacian matrices, as a subset of $\Omega_n$.

\bigskip\bigskip
{\bf Acknowledgments.} We thank Kenneth H. Karlsen for pointing out the connections to the heat equation and, more generally, the role of the discrete Laplacian in PDEs.  
Enide Andrade is supported by CIDMA under the
Portuguese Foundation for Science and Technology 
(FCT, https://ror.org/00snfqn58)  
Multi-Annual Financing Program for R\&D Units with project UID/PRR/4106/2025.

\end{document}